\documentclass[11pt]{amsart}

\usepackage[T1]{fontenc}
\usepackage[utf8]{inputenc}
\usepackage{lmodern}
\usepackage{microtype}
\usepackage{amsmath,amssymb,amsthm,mathtools}
\usepackage{mathrsfs}
\usepackage{enumitem}
\usepackage{xcolor}
\usepackage{hyperref}
\usepackage[nameinlink,noabbrev]{cleveref}

\hypersetup{
  colorlinks=true,
  linkcolor=blue!50!black,
  citecolor=blue!50!black,
  urlcolor=blue!50!black,
  pdftitle={The Domb Apéry-limit and a proof of the Ramanujan Machine conjecture Z2},
  pdfauthor={Alex Shvets}
}

\newtheorem{theorem}{Theorem}[section]
\newtheorem{proposition}[theorem]{Proposition}
\newtheorem{lemma}[theorem]{Lemma}
\newtheorem{corollary}[theorem]{Corollary}
\theoremstyle{definition}

\theoremstyle{remark}
\newtheorem{remark}[theorem]{Remark}

\newcommand{\Q}{\mathbb{Q}}
\newcommand{\Z}{\mathbb{Z}}
\newcommand{\C}{\mathbb{C}}
\newcommand{\Hh}{\mathfrak{H}}
\newcommand{\dd}{\mathrm{d}}
\newcommand{\e}{\mathrm{e}}
\newcommand{\ii}{\mathrm{i}}
\newcommand{\etaf}{\eta}
\newcommand{\Efour}{E_4}
\newcommand{\sigthree}{\sigma_3}
\newcommand{\Dop}{D}
\newcommand{\thet}{\theta}
\newcommand{\wt}[2]{\left.#1\right|_{#2}W}
\newcommand{\sm}[4]{\begin{psmallmatrix}#1&#2\\#3&#4\end{psmallmatrix}}
\DeclareMathOperator{\PCF}{PCF}
\DeclareMathOperator{\Res}{Res}

\title[The Domb Apéry-limit and RM conjecture Z2]{The Domb Apéry-limit and a proof of the Ramanujan Machine conjecture Z2}
\author{Alex Shvets}
\address{Haifa, Israel}
\email{alex@shvets.io}
\subjclass[2020]{Primary 11Y60; Secondary 11F11, 11F67, 33C20, 11B83}

\begin{document}

\begin{abstract}
We prove the Domb Apéry-limit
\[
\lim_{n\to\infty}\frac{B_n}{D_n}=\frac{7}{24}\zeta(3),
\]
where $D_n$ are the Domb numbers and $B_n$ is the rational companion sequence satisfying the same three-term recurrence with initial data $B_0=0$, $B_1=1$. As corollaries we obtain the identity
\[
\sum_{n\ge 1}\frac{64^n}{n^3D_nD_{n-1}}=\frac{56}{3}\zeta(3)
\]
and a proof of the Ramanujan Machine continued-fraction conjecture Z2:
\[
\PCF\bigl((2n+1)(5n^2+5n+2),-16n^6\bigr)=\frac{12}{7\zeta(3)}.
\]
The argument combines the level-$6$ eta-product parametrization of the Domb generating function with an Atkin-Lehner transformation law for a weight $-2$ Eichler integral and a Mellin-transform computation of the associated period polynomial.
\end{abstract}

\maketitle

\section{Introduction}

Let
\begin{equation}\label{eq:Domb-def}
D_n:=\sum_{k=0}^n \binom{n}{k}^2\binom{2k}{k}\binom{2(n-k)}{n-k}
\qquad(n\ge 0)
\end{equation}
be the Domb numbers (OEIS~A002895). Their ordinary generating function
\[
\mathcal A(z):=\sum_{n\ge 0}D_nz^n
\]
is the unique holomorphic solution at $z=0$ of the third-order differential equation
\begin{equation}\label{eq:domb-theta-ode}
\bigl[\thet^3-2z(2\thet+1)(5\thet^2+5\thet+2)+64z^2(\thet+1)^3\bigr]y=0,
\qquad \thet=z\frac{\dd}{\dd z}.
\end{equation}
This sequence lies at the center of several Apéry-like phenomena: it admits hypergeometric and modular parametrizations, appears in the theory of random walks and Bessel moments, and it is exactly the sequence used by Cohen in his normalization of the Ramanujan Machine conjecture Z2 for $\zeta(3)$ \cite{BSWZ,Cohen23,Raayoni2021,Yang2008,Zhou1911}.

The Apéry-limit
\begin{equation}\label{eq:main-limit-intro}
\lim_{n\to\infty}\frac{B_n}{D_n}=\frac{7}{24}\zeta(3)
\end{equation}
for the rational companion sequence $B_n$ is listed in Almkvist--van Enckevort--van Straten--Zudilin \cite{AVSZ} as a known value, but that paper gives neither a proof nor a bibliographic pointer for the Domb case. On the other hand, Cohen's Domb sum
\begin{equation}\label{eq:cohen-intro}
\sum_{n\ge 1}\frac{64^n}{n^3D_nD_{n-1}}=\frac{56}{3}\zeta(3)
\end{equation}
appears in \cite{Cohen23} in experimental form. The purpose of the present note is to fill this gap and to deduce from it the Ramanujan Machine continued fraction Z2.

The main results are the following.

\begin{theorem}[Domb Apéry-limit]\label{thm:A}
Let $\{D_n\}_{n\ge 0}$ be given by \eqref{eq:Domb-def}. Let $\{B_n\}_{n\ge 0}\subset\Q$ be defined by
\begin{equation}\label{eq:B-recurrence-intro}
(n+1)^3u_{n+1}=2(2n+1)(5n^2+5n+2)u_n-64n^3u_{n-1}
\qquad(n\ge 1),
\end{equation}
with $B_0=0$ and $B_1=1$. Then
\[
\lim_{n\to\infty}\frac{B_n}{D_n}=\frac{7}{24}\zeta(3).
\]
\end{theorem}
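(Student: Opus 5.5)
We follow the standard Beukers-type route and reduce the limit to the value of a weight $-2$ Eichler integral at a cusp, using the level-$6$ modular parametrization of $\mathcal A(z)$ mentioned in the abstract.

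\emph{Step 1: the inhomogeneous equation.} Let $\mathcal B(z):=\sum_{n\ge0}B_nz^n$, and write $L$ for the operator in \eqref{eq:domb-theta-ode}. The recurrence \eqref{eq:B-recurrence-intro} holds for $n\ge1$ and we have $B_0=0$, $B_1=1$. Comparing coefficients then gives $L\mathcal B=c\,z$ for an explicit constant $c$; the $n=0$ coefficient vanishes because of the factor $\thet^3$ and $B_0=0$. Next take a Hauptmodul $z=z(\tau)$ for $\Gamma_0(6)^{+}$-type data and a weight-$2$ eta quotient $f(\tau)=\mathcal A(z(\tau))$. Both are taken from the known parametrization (Chan--Chan--Liu / Rogers): $z=\eta(\tau)^6\eta(3\tau)^6/(\eta(2\tau)\eta(6\tau))^{6}$ up to sign, and $f$ a product of $\eta(k\tau)$ for $k\mid6$. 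The standard Wronskian computation converts $L y=cz$ into $\bigl(\tfrac{1}{2\pi\ii}\tfrac{\dd}{\dd\tau}\bigr)^3\bigl(\mathcal B/\mathcal A\bigr)=g(\tau)$, where $g$ is a weight-$4$ form, explicitly an eta quotient times $z$, hence a combination of $\Efour$-type terms and $\sigthree$-Eisenstein series at level $6$. Consequently $\mathcal B(z(\tau))=f(\tau)\,\widetilde g(\tau)$, where $\widetilde g=\sum_{n\ge1}a_g(n)n^{-3}q^n$ is the weight $-2$ Eichler integral.

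\emph{Step 2: from the limit to a cusp value.} Both $\mathcal A$ and $\mathcal B$ have radius of convergence $1/16$, the smallest root of $1-20z+64z^2$ being $z=1/16$. Near $z_0=1/16$, $\mathcal A$ has a logarithmic singularity and $\mathcal B-\lambda\mathcal A$ is less singular for exactly one constant $\lambda$. Since $D_n>0$ grows like $16^n n^{-3/2}$, a Darboux/transfer argument, or simply the Poincaré--Perron structure of \eqref{eq:B-recurrence-intro} together with a dominant-solution argument, gives $B_n/D_n\to\lambda$. Writing $\mathcal B=\mathcal A\cdot(\mathcal B/\mathcal A)$, $\lambda$ is the limiting value of $\widetilde g(\tau)$ as $\tau$ tends to the point $\tau_0$ with $z(\tau_0)=1/16$, provided the correction terms vanish there. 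That point $\tau_0$ is a fixed point or cusp of the Atkin--Lehner involution $W$ (e.g.\ $W_6$ or $W_2$).

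\emph{Step 3: the Atkin--Lehner law and the period polynomial.} This is the main obstacle. Apply $W$ to $g$; since $g$ is Eisenstein, $g|_4W=\pm g'$ for a related eta/Eisenstein form. The Eichler integral then transforms as $\widetilde g|_{-2}W-\widetilde g'=P(\tau)$, with $P$ a period polynomial of degree $\le2$ plus possible $\tau^{-1}$, $\log$ terms. We compute $P$ via Mellin transforms: $\int_0^\infty g(\ii t)t^{s-1}\,\dd t$ equals $(2\pi)^{-s}\Gamma(s)L(g,s)$, and the $L$-function is a combination of $\zeta(s)\zeta(s-3)$ twisted by the level-$6$ Euler factors $(1\pm2^{-s})(1\pm3^{-s})$. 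The coefficients of $P$ come from residues and from the values $L(g,j)$ for $j=1,2,3$. Only the constant term survives at $\tau_0$, yielding a multiple of $\zeta(3)$ with Euler-factor rational coefficients; it should combine to $\tfrac{7}{24}$, with $7/8=1-2^{-3}$ appearing. The delicate points are tracking the constant terms of $g$ at every cusp, which produce the $\tau^{-1}$ and $\log$ terms, and checking that the contributions of $\zeta(1)$/$\zeta(-1)$ and $\pi^2$ cancel, so that $\lambda$ is a pure rational multiple of $\zeta(3)$. I would first verify the final constant numerically against $B_n/D_n$ for $n\approx50$ to fix signs.
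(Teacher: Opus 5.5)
Your proposal has a genuine gap: Step~2 identifies the wrong local quantity, so Step~3 as planned cannot produce the constant.

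\textbf{Step 2: the wrong local quantity.} The point with $\xi(\tau_0)=1/16$ is not a cusp. It is $\tau_*=\frac12+\frac{\ii}{2\sqrt3}$, the fixed point of the Atkin--Lehner element $W=\sm{3}{-2}{6}{-3}$ (of type $W_3$, not $W_6$ or $W_2$), which is an elliptic point of order $2$. Correspondingly, the singularity at $z=1/16$ is not logarithmic. The local exponents are $0,\frac12,1$, so $\mathcal A=\alpha_0+\alpha_1\varepsilon^{1/2}+O(\varepsilon)$ and $\mathcal B=\beta_0+\beta_1\varepsilon^{1/2}+O(\varepsilon)$ with $\varepsilon=1-16z$; this is what your $16^nn^{-3/2}$ growth actually reflects. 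Your $\lambda$ (the constant making $\mathcal B-\lambda\mathcal A$ less singular) is $\beta_1/\alpha_1$. Since $\varepsilon^{1/2}$ is a uniformizer proportional to $\tau-\tau_*$, this is a ratio of $\tau$-derivatives, not a value:
\[
\lambda=\frac{(AE)'(\tau_*)}{A'(\tau_*)}=E(\tau_*)+\frac{A(\tau_*)}{A'(\tau_*)}\,E'(\tau_*),\qquad E=\mathcal B(\xi)/A .
\]
The ``correction term'' does not vanish. Numerically $E(\tau_*)\approx 0.1389$, while $\frac{7}{24}\zeta(3)\approx 0.3506$. So the reduction ``$\lambda=$ value of the Eichler integral at $\tau_0$'' is false.

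\textbf{Step 3: evaluating at the fixed point gives nothing.} The transformation law is $E|_{-2}W+E=\frac76\zeta(3)(3\tau^2-3\tau+1)$. At $\tau_*$ the right side vanishes, since $\tau_*$ is precisely a root of that quadratic. On the left, $(E|_{-2}W)(\tau_*)=\frac{(6\tau_*-3)^2}{3}E(\tau_*)=-E(\tau_*)$. So evaluation at $\tau_*$ yields $0=0$, and there is no surviving constant term to read off.

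\textbf{The missing idea: differentiate at $\tau_*$.} Differentiating the law at $\tau_*$, using $W'(\tau_*)=-1$ and $6\tau_*-3=\ii\sqrt3$, gives $E(\tau_*)+E'(\tau_*)/(2\ii\sqrt3)=\frac{7}{24}\zeta(3)$. Differentiating $A|_2W=-A$ at $\tau_*$ gives $A'(\tau_*)=2\ii\sqrt3\,A(\tau_*)$. Substituting the second into the display above turns it into exactly the first, so $\lambda=\frac7{24}\zeta(3)$.

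\textbf{Smaller corrections.}
\begin{enumerate}[label=(\roman*)]
\item The Mellin transform must be taken along the $W$-invariant geodesic $\tau=\frac12+\frac{\ii y}{2\sqrt3}$, not the imaginary axis. This replaces $L(g,s)$ by the alternating twist $\sum_n(-1)^na_nn^{-s}$, with functional equation under $s\mapsto -s-2$. Its vanishing at $s=2$ kills the residue at $s=-1$, so only the residues at $s=0,-2$ contribute.
\item Your Hauptmodul is inverted. $\eta(\tau)^6\eta(3\tau)^6/(\eta(2\tau)\eta(6\tau))^6$ has a pole at $q=0$; the right one is $-\eta(2\tau)^6\eta(6\tau)^6/(\eta(\tau)^6\eta(3\tau)^6)$.
\item No $\tau^{-1}$ or logarithmic terms arise. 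Because $g|_4W=-g$ and $g$ vanishes at $\infty$, one has $\Dop^3(E|_{-2}W+E)=0$ on the upper half-plane, so the period function is a genuine quadratic polynomial.
\end{enumerate}
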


\begin{theorem}[Domb sum]\label{thm:C}
One has
\[
\sum_{n\ge 1}\frac{64^n}{n^3D_nD_{n-1}}=\frac{56}{3}\zeta(3).
\]
\end{theorem}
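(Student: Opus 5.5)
Theorem~\ref{thm:C} follows from Theorem~\ref{thm:A} through a Casoratian (Wronskian) identity for the recurrence \eqref{eq:B-recurrence-intro}, followed by telescoping. Let $W_n:=B_{n+1}D_n-B_nD_{n+1}$. Multiply the recurrence for $B$ by $D_n$ and the recurrence for $D$ by $B_n$, then subtract. The terms with coefficient $2(2n+1)(5n^2+5n+2)$ cancel, and we are left with
\[
(n+1)^3W_n=64n^3\,W_{n-1}\qquad(n\ge 1).
\]
The initial value is $W_0=B_1D_0-B_0D_1=1$, so by induction
\[
W_n=\frac{64^n}{(n+1)^3}\qquad(n\ge 0).
\]
Dividing by $D_nD_{n+1}$ gives
\[
\frac{B_{n+1}}{D_{n+1}}-\frac{B_n}{D_n}=\frac{64^n}{(n+1)^3D_nD_{n+1}}.
\]
Here we use $D_n>0$, which is clear from \eqref{eq:Domb-def} since every summand is a positive integer.

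Next we sum this identity over $0\le n\le N-1$. Since $B_0=0$, we obtain
\[
\frac{B_N}{D_N}=\sum_{m=1}^{N}\frac{64^{m-1}}{m^3D_{m-1}D_m}=\frac1{64}\sum_{m=1}^{N}\frac{64^m}{m^3D_mD_{m-1}}.
\]
All terms of the series are positive. By Theorem~\ref{thm:A} the left-hand side tends to $\tfrac{7}{24}\zeta(3)$ as $N\to\infty$, so the series converges, and its sum is
\[
64\cdot\tfrac{7}{24}\zeta(3)=\tfrac{56}{3}\zeta(3).
\]

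The only step needing care is the bookkeeping in the Casoratian computation. It must be checked that the shift in the $-64n^3u_{n-1}$ term produces exactly the factor $n^3/(n+1)^3$, and that the recurrence, stated for $n\ge 1$, is consistent with $B_0=0,B_1=1$ and with $D_0=1$, $D_1=4$. It is also worth noting that $D_1$ is never actually needed, since $W_0$ involves only $B_0$ and $B_1$. With this, Theorem~\ref{thm:C} is equivalent to Theorem~\ref{thm:A}, and the real difficulty lies entirely in proving the latter.
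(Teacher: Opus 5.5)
Your proposal is correct and is essentially the paper's argument. The paper's discrete Wronskian $D_nB_{n-1}-D_{n-1}B_n=-64^{n-1}/n^3$ is just your $-W_{n-1}$. It is telescoped from $B_0=0$ to give $\sum_{n=1}^N 64^n/(n^3D_nD_{n-1})=64B_N/D_N$, and then Theorem~\ref{thm:A} is applied exactly as you do; your explicit note that $D_n>0$, which justifies the division, is a detail the paper leaves implicit.
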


\begin{theorem}[Ramanujan Machine conjecture Z2]\label{thm:B}
The continued fraction
\[
2+\cfrac{-16\cdot 1^6}{36+\cfrac{-16\cdot 2^6}{160+\cfrac{-16\cdot 3^6}{434+\ddots}}}
\]
converges and its value is
\[
\frac{12}{7\zeta(3)}.
\]
Equivalently,
\[
\PCF\bigl((2n+1)(5n^2+5n+2),-16n^6\bigr)=\frac{12}{7\zeta(3)}.
\]
\end{theorem}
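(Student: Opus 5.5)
The plan is to deduce Theorem~\ref{thm:B} directly from Theorem~\ref{thm:A} by identifying the numerators and denominators of the continued fraction with rescaled copies of $D_n$ and $B_n$. Write $a_n=(2n+1)(5n^2+5n+2)$ and $b_n=-16n^6$, so that $a_0=2$, $a_1=36$, $a_2=160$, matching the displayed fraction. The convergents $p_n/q_n$ satisfy
\[
p_n=a_np_{n-1}+b_np_{n-2},\qquad q_n=a_nq_{n-1}+b_nq_{n-2}\qquad(n\ge 1),
\]
with $p_{-1}=1$, $p_0=a_0=2$, $q_{-1}=0$, $q_0=1$. The goal is to turn this into the recurrence \eqref{eq:B-recurrence-intro} by a diagonal change of variables.

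The key step is the substitution $p_{n-1}=\lambda_n u_n$ with $\lambda_n=n!^3/2^n$. Insert $u_{n+1}=p_n/\lambda_{n+1}$ into \eqref{eq:B-recurrence-intro} and multiply through by $\lambda_{n+1}/(n+1)^3=n!^3/2^{n+1}$. The coefficient of $p_{n-1}$ becomes
\[
2a_n\cdot\frac{n!^3}{2^{n+1}\lambda_n}=a_n .
\]
The coefficient of $p_{n-2}$ becomes
\[
-64n^3\cdot\frac{n!^3\,2^{n-1}}{2^{n+1}(n-1)!^3}=-16n^6=b_n .
\]
Hence $(u_n)$ solves \eqref{eq:B-recurrence-intro} for $n\ge1$ exactly when $(\lambda_{n+1}u_{n+1})_{n\ge -1}$ solves the continued-fraction recurrence. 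This is a routine verification, and I would carry it out first.

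Next, match the initial data. Since $\lambda_0=1$ and $\lambda_1=1/2$, the sequence $p$ corresponds to $u_0=1$ and $u_1=p_0/\lambda_1=4$. These are precisely $D_0=1$ and $D_1=4$, and $D_n$ satisfies \eqref{eq:B-recurrence-intro} because \eqref{eq:domb-theta-ode} translates into it coefficientwise. The sequence $q$ corresponds to $u_0=0$ and $u_1=2$, which is $2B_n$. Since the recurrence determines everything from two initial values, we get
\[
p_n=\lambda_{n+1}D_{n+1},\qquad q_n=2\lambda_{n+1}B_{n+1},
\]
and therefore
\[
\frac{p_n}{q_n}=\frac{D_{n+1}}{2B_{n+1}} .
\]

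Finally, invoke Theorem~\ref{thm:A}. We have $D_n>0$ for all $n$, being a sum of positive terms, and $B_n/D_n\to\frac{7}{24}\zeta(3)>0$. So $B_n\neq 0$ for all large $n$, the convergents are eventually well defined, and
\[
\frac{p_n}{q_n}\longrightarrow \frac{1}{2\cdot\frac{7}{24}\zeta(3)}=\frac{12}{7\zeta(3)} .
\]
This proves both the convergence and the value. All of the analytic difficulty is in Theorem~\ref{thm:A}, which we may assume. The only point needing care here is bookkeeping: the index shift between $p_n$ and $u_{n+1}$, and checking that the recurrence is used only for $n\ge 1$, where \eqref{eq:B-recurrence-intro} is valid.
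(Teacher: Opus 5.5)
Your proposal is correct and follows the paper's route: \cref{prop:continuants} proves exactly $P_n=\frac{(n+1)!^3}{2^{n+1}}D_{n+1}$ and $Q_n=\frac{(n+1)!^3}{2^n}B_{n+1}$ by the same diagonal rescaling and matching of initial values, and then passes to the limit using \cref{thm:A}. Your check that $q_n\neq 0$ is a small point the paper leaves implicit; in fact \cref{lem:wronskian} gives $B_n/D_n>0$, hence $B_n>0$, for every $n\ge 1$, so every convergent is defined.
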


The proof of \cref{thm:A} occupies \S\S\ref{sec:modular}--\ref{sec:proof-A}. Theorems~\ref{thm:C} and \ref{thm:B} then follow by finite-difference and continuant algebra.

\section{Domb recurrence, companion sequence, and continuants}\label{sec:recurrence}

The Domb numbers satisfy the well-known three-term recurrence
\begin{equation}\label{eq:Domb-rec}
(n+1)^3D_{n+1}=2(2n+1)(5n^2+5n+2)D_n-64n^3D_{n-1}
\qquad(n\ge 1),
\end{equation}
with $D_0=1$ and $D_1=4$; equivalently,
\begin{equation}\label{eq:Domb-rec-shifted}
(n+2)^3D_{n+2}-(2n+3)(10n^2+30n+24)D_{n+1}+64(n+1)^3D_n=0
\qquad(n\ge 0).
\end{equation}
See, for instance, \cite[Table~2, case $6(B)$]{Cooper2023}.

Let $\{B_n\}_{n\ge 0}\subset\Q$ be the unique sequence satisfying the same recurrence \eqref{eq:B-recurrence-intro} and the initial values $B_0=0$, $B_1=1$.

\begin{lemma}[discrete Wronskian]\label{lem:wronskian}
Define
\[
W_n:=D_nB_{n-1}-D_{n-1}B_n\qquad(n\ge 1).
\]
Then
\[
W_n=-\frac{64^{n-1}}{n^3}\qquad(n\ge 1).
\]
Consequently,
\begin{equation}\label{eq:increment-ratio}
\frac{B_n}{D_n}-\frac{B_{n-1}}{D_{n-1}}=\frac{64^{n-1}}{n^3D_nD_{n-1}}
\qquad(n\ge 1).
\end{equation}
\end{lemma}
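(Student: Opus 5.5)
Since $D_n$ and $B_n$ satisfy the same three-term recurrence \eqref{eq:B-recurrence-intro} (equivalently \eqref{eq:Domb-rec}), I will show that the discrete Wronskian $W_n$ obeys a first-order recurrence, and then solve it from the initial value. Write the recurrence as $(n+1)^3u_{n+1}=a_nu_n-64n^3u_{n-1}$ with $a_n=2(2n+1)(5n^2+5n+2)$, valid for $n\ge1$. For $n\ge1$ compute
\[
(n+1)^3W_{n+1}=(n+1)^3\bigl(D_{n+1}B_n-D_nB_{n+1}\bigr).
\]
Substituting $(n+1)^3D_{n+1}=a_nD_n-64n^3D_{n-1}$ and $(n+1)^3B_{n+1}=a_nB_n-64n^3B_{n-1}$, the terms $a_nD_nB_n$ cancel. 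What remains is
\[
(n+1)^3W_{n+1}=-64n^3D_{n-1}B_n+64n^3D_nB_{n-1}=64n^3W_n .
\]
Hence $(n+1)^3W_{n+1}=64\,n^3W_n$ for all $n\ge1$. In other words, the quantity $n^3W_n/64^{n-1}$ is constant in $n\ge1$.

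The initial value is $W_1=D_1B_0-D_0B_1=4\cdot0-1\cdot1=-1$, using $D_0=1$, $D_1=4$, $B_0=0$, $B_1=1$. Therefore $n^3W_n/64^{n-1}=1^3\cdot W_1=-1$, which gives $W_n=-64^{n-1}/n^3$ for all $n\ge1$, as claimed. I would double-check that the recurrence is used only for $n\ge1$, which is exactly what the induction step requires, so no boundary issue at $n=0$ arises.

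For \eqref{eq:increment-ratio}, note first that $D_n>0$ for all $n$, since it is a sum of positive terms by \eqref{eq:Domb-def}; so division by $D_nD_{n-1}$ is legitimate. Then
\[
\frac{B_n}{D_n}-\frac{B_{n-1}}{D_{n-1}}=\frac{B_nD_{n-1}-B_{n-1}D_n}{D_nD_{n-1}}=\frac{-W_n}{D_nD_{n-1}}=\frac{64^{n-1}}{n^3D_nD_{n-1}}.
\]
There is no real obstacle here. The only points needing care are the sign convention in $W_n$ and the positivity of $D_n$.
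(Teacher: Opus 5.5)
Your proposal is correct and matches the paper's proof essentially verbatim: you substitute the recurrence to get $(n+1)^3W_{n+1}=64n^3W_n$, use the initial value $W_1=-1$, and then divide by $D_nD_{n-1}$. Your remark that $D_n>0$, which justifies that division, is a small point the paper leaves implicit.
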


\begin{proof}
From \eqref{eq:B-recurrence-intro} and \eqref{eq:Domb-rec} we obtain
\begin{align*}
W_{n+1}
&=D_{n+1}B_n-D_nB_{n+1}\\
&=\frac{1}{(n+1)^3}\Bigl(2(2n+1)(5n^2+5n+2)D_n-64n^3D_{n-1}\Bigr)B_n\\
&\qquad -\frac{1}{(n+1)^3}D_n\Bigl(2(2n+1)(5n^2+5n+2)B_n-64n^3B_{n-1}\Bigr)\\
&=\frac{64n^3}{(n+1)^3}\,(D_nB_{n-1}-D_{n-1}B_n)
=\frac{64n^3}{(n+1)^3}W_n.
\end{align*}
Since $W_1=D_1B_0-D_0B_1=-1$, induction gives the stated formula. Dividing by $D_nD_{n-1}$ yields \eqref{eq:increment-ratio}.
\end{proof}

\begin{corollary}[finite telescoping identity]\label{cor:telescoping}
For every $N\ge 1$,
\begin{equation}\label{eq:finite-telescoping}
\sum_{n=1}^N\frac{64^n}{n^3D_nD_{n-1}}=64\frac{B_N}{D_N}.
\end{equation}
\end{corollary}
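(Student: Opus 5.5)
The identity follows by telescoping the increment formula \eqref{eq:increment-ratio} of \cref{lem:wronskian}. Multiplying \eqref{eq:increment-ratio} by $64$ gives, for each $n\ge 1$,
\[
\frac{64^n}{n^3D_nD_{n-1}}=64\left(\frac{B_n}{D_n}-\frac{B_{n-1}}{D_{n-1}}\right).
\]
Summing this over $1\le n\le N$, the right-hand side collapses to $64\bigl(B_N/D_N-B_0/D_0\bigr)$. With the initial values $B_0=0$ and $D_0=1$, this equals $64\,B_N/D_N$, which is \eqref{eq:finite-telescoping}.

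The only point that needs care is that the quotients $B_n/D_n$ are well defined, i.e.\ that $D_n\neq 0$ for all $n\ge 0$. This is immediate from the definition \eqref{eq:Domb-def}: every summand is a product of nonnegative binomial coefficients, and the $k=0$ term equals $\binom{2n}{n}>0$. Hence $D_n\ge 1$ for every $n$. The same observation already underlies the division by $D_nD_{n-1}$ in \cref{lem:wronskian}, so it can be invoked there or restated here.

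No step presents a real obstacle. The substantive input, the closed form $W_n=-64^{n-1}/n^3$, is supplied by \cref{lem:wronskian}. It is still worth checking the normalisation on the first term. For $N=1$ the left side is $64/(1\cdot D_1D_0)=64/4=16$, and the right side is $64\,B_1/D_1=64/4=16$, so the factor $64$ and the index shift agree.
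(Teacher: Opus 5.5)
Your proof is correct and is essentially the paper's: multiply \eqref{eq:increment-ratio} by $64$, sum over $1\le n\le N$, and use $B_0=0$. Your side remark that $D_n\ge\binom{2n}{n}\ge 1$ justifies the divisions, which the paper leaves implicit, and your check of the case $N=1$ confirms the normalisation.
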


\begin{proof}
Sum \eqref{eq:increment-ratio} from $n=1$ to $n=N$ and use $B_0=0$.
\end{proof}

\begin{proof}[Proof of \cref{thm:C} assuming \cref{thm:A}]
Let $N\to\infty$ in \eqref{eq:finite-telescoping}. By \cref{thm:A} the right-hand side tends to
\[
64\cdot \frac{7}{24}\zeta(3)=\frac{56}{3}\zeta(3).
\]
\end{proof}

We next connect the Domb recurrence to the Ramanujan Machine continued fraction Z2.

\begin{proposition}[continuants and Domb normalization]\label{prop:continuants}
Let
\[
a_0:=2,
\qquad a_n:=(2n+1)(5n^2+5n+2)
\quad(n\ge 1),
\qquad b_n:=-16n^6.
\]
Define continuants by
\[
P_{-1}=1,\quad P_0=2,\qquad Q_{-1}=0,\quad Q_0=1,
\]
\begin{equation}\label{eq:continuant-rec}
U_n=a_nU_{n-1}+b_nU_{n-2}\qquad(n\ge 1),
\end{equation}
where $U_n=P_n$ or $U_n=Q_n$. Then
\begin{equation}\label{eq:continuants-Domb}
P_n=\frac{(n+1)!^3}{2^{n+1}}D_{n+1},
\qquad
Q_n=\frac{(n+1)!^3}{2^n}B_{n+1}
\qquad(n\ge 0).
\end{equation}
Hence
\begin{equation}\label{eq:convergent-ratio}
\frac{P_n}{Q_n}=\frac{D_{n+1}}{2B_{n+1}}.
\end{equation}
\end{proposition}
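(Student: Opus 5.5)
The plan is to rescale the Domb recurrence \eqref{eq:Domb-rec} so that it becomes the continuant recurrence \eqref{eq:continuant-rec}, and then conclude by uniqueness. A sequence $(U_n)_{n\ge -1}$ satisfying \eqref{eq:continuant-rec} is determined by $U_{-1}$ and $U_0$. So it suffices to show two things: that the right-hand sides of \eqref{eq:continuants-Domb} satisfy \eqref{eq:continuant-rec} for $n\ge 1$, and that they agree with the prescribed values at $n=-1,0$.

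\textbf{The sequence $P_n$.} Set $X_n:=(n+1)!^3D_{n+1}/2^{n+1}$ for $n\ge -1$.
\begin{itemize}
\item \emph{Initial values.} We have $X_{-1}=D_0=1$ and $X_0=D_1/2=2$, as required.
\item \emph{Setting up the recurrence.} For $n\ge 1$, take \eqref{eq:Domb-rec} at index $n$ and rewrite $2(2n+1)(5n^2+5n+2)$ as $2a_n$:
\[
(n+1)^3D_{n+1}=2a_nD_n-64n^3D_{n-1}.
\]
Multiply both sides by $n!^3/2^{n+1}$. The left side becomes $X_n$.
\item \emph{First term.} The term $2a_nD_n\cdot n!^3/2^{n+1}$ equals $a_n\,n!^3D_n/2^n=a_nX_{n-1}$.
\item \emph{Second term.} Use $n!^3=n^3(n-1)!^3$ and $2^{n+1}=4\cdot 2^{n-1}$:
\[
64n^3\cdot\frac{n!^3}{2^{n+1}}\,D_{n-1}=\frac{64}{4}\,n^6\,\frac{(n-1)!^3D_{n-1}}{2^{n-1}}=16n^6X_{n-2}.
\]
\item \emph{Conclusion.} Hence $X_n=a_nX_{n-1}-16n^6X_{n-2}=a_nX_{n-1}+b_nX_{n-2}$ for $n\ge 1$. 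By uniqueness, $X_n=P_n$.
\end{itemize}
The exponents of $2$ have to be tracked carefully here; this bookkeeping is the only delicate point.

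\textbf{The sequence $Q_n$.} Set $Y_n:=(n+1)!^3B_{n+1}/2^n$. This is $2$ times the same rescaling applied to $B$. Since $B$ satisfies the same recurrence \eqref{eq:B-recurrence-intro} and the recurrence is linear, the identical computation shows that $Y_n$ satisfies \eqref{eq:continuant-rec}. The initial values are $Y_{-1}=2B_0=0$ and $Y_0=B_1=1$, matching $Q_{-1}=0$ and $Q_0=1$. By uniqueness, $Y_n=Q_n$.

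\textbf{The ratio \eqref{eq:convergent-ratio}.} Dividing the two formulas in \eqref{eq:continuants-Domb} gives $P_n/Q_n=D_{n+1}/(2B_{n+1})$. For this quotient to make sense we need $B_{n+1}\neq 0$.
\begin{itemize}
\item First, $D_m>0$ for all $m$, since every term in the defining sum \eqref{eq:Domb-def} is positive.
\item By \eqref{eq:increment-ratio} in \cref{lem:wronskian}, the ratio $B_m/D_m$ is strictly increasing in $m$.
\item Since $B_0/D_0=0$, it follows that $B_m/D_m>0$, and hence $B_m>0$, for all $m\ge 1$.
\end{itemize}
Therefore $Q_n>0$ for all $n\ge 0$, and the division is legitimate.
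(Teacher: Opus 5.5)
Your proposal is correct and follows the paper's proof: you rescale the Domb recurrence by $(n+1)!^3/2^{n+1}$ to turn it into the continuant recurrence, match the initial data, and invoke uniqueness, with $Q_n$ handled by the same computation (you index the initial data at $n=-1,0$ where the paper checks $n=0,1$, which is immaterial). You also check that $B_{n+1}>0$, using $D_m>0$ and the monotonicity from \cref{lem:wronskian}; the paper leaves this implicit, and it properly justifies the division in \eqref{eq:convergent-ratio}.
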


\begin{proof}
Set
\[
\widetilde P_n:=\frac{(n+1)!^3}{2^{n+1}}D_{n+1}.
\]
Using \eqref{eq:Domb-rec} with index $n$ we obtain
\begin{align*}
\widetilde P_n
&=\frac{(n+1)!^3}{2^{n+1}}D_{n+1}\\
&=\frac{(n+1)!^3}{2^{n+1}}\cdot \frac{2(2n+1)(5n^2+5n+2)D_n-64n^3D_{n-1}}{(n+1)^3}\\
&=(2n+1)(5n^2+5n+2)\frac{n!^3}{2^n}D_n-16n^6\frac{(n-1)!^3}{2^{n-1}}D_{n-1}\\
&=a_n\widetilde P_{n-1}+b_n\widetilde P_{n-2}
\end{align*}
for $n\ge 1$. Also $\widetilde P_0=2$ and $\widetilde P_1=56=a_1\widetilde P_0+b_1P_{-1}$, so $\widetilde P_n=P_n$ by uniqueness for \eqref{eq:continuant-rec}.

The same calculation with $B_n$ in place of $D_n$ gives
\[
\widetilde Q_n:=\frac{(n+1)!^3}{2^n}B_{n+1}
\]
satisfying \eqref{eq:continuant-rec}. Here $\widetilde Q_0=1$ and $\widetilde Q_1=36=a_1$, hence $\widetilde Q_n=Q_n$. Formula \eqref{eq:convergent-ratio} follows immediately.
\end{proof}

\begin{proof}[Proof of \cref{thm:B} assuming \cref{thm:A}]
By construction, $P_n/Q_n$ is the $n$th convergent of the continued fraction in \cref{thm:B}. From \eqref{eq:convergent-ratio} and \cref{thm:A} we get
\[
\lim_{n\to\infty}\frac{P_n}{Q_n}
=\frac{1}{2\lim\limits_{n\to\infty}B_{n+1}/D_{n+1}}
=\frac{1}{2\cdot \frac{7}{24}\zeta(3)}
=\frac{12}{7\zeta(3)}.
\qedhere
\]
\end{proof}

\section{Modular parametrization and the Eichler integral}\label{sec:modular}

We write
\[
\mathcal A(z):=\sum_{n\ge 0}D_nz^n,
\qquad
\mathcal B(z):=\sum_{n\ge 0}B_nz^n.
\]
By construction, $\mathcal A$ satisfies \eqref{eq:domb-theta-ode}, while $\mathcal B$ satisfies the inhomogeneous equation
\begin{equation}\label{eq:B-theta-inhom}
\bigl[\thet^3-2z(2\thet+1)(5\thet^2+5\thet+2)+64z^2(\thet+1)^3\bigr]\mathcal B=z.
\end{equation}
In ordinary differential form these become
\begin{align}
&z^2(4z-1)(16z-1)y''' + 3z(128z^2-30z+1)y'' \notag\\
&\qquad + (448z^2-68z+1)y' +4(16z-1)y =0,
\label{eq:A-ordinary}
\end{align}
for $y=\mathcal A$, and the same equation with right-hand side $z$ for $y=\mathcal B$.

The modular parametrization is the level-$6$ eta-product identity (Chan--Zudilin; Zhou)
\begin{equation}\label{eq:xi-def}
\xi(\tau):=-\frac{\etaf(2\tau)^6\etaf(6\tau)^6}{\etaf(\tau)^6\etaf(3\tau)^6},
\qquad
A(\tau):=\frac{\etaf(\tau)^4\etaf(3\tau)^4}{\etaf(2\tau)^2\etaf(6\tau)^2},
\end{equation}
with
\begin{equation}\label{eq:domb-param}
A(\tau)=\mathcal A(\xi(\tau))=\sum_{n\ge 0}D_n\,\xi(\tau)^n.
\end{equation}
See \cite[(3.2)--(3.7)]{Zhou1911}; compare also \cite{BSWZ}.

The Atkin--Lehner involution relevant here is
\begin{equation}\label{eq:W-def}
W(\tau):=\frac{3\tau-2}{6\tau-3},
\qquad
W=\sm{3}{-2}{6}{-3},
\qquad
\det W=3,
\qquad
W^2=-3I.
\end{equation}
We use the usual slash operator for matrices of positive determinant:
\[
(f|_k M)(\tau):=(\det M)^{k/2}(c\tau+d)^{-k}f\!\left(\frac{a\tau+b}{c\tau+d}\right),
\qquad M=\sm{a}{b}{c}{d}.
\]
From \cite[(3.4), (3.7)]{Zhou1911} we have
\begin{equation}\label{eq:xi-A-W}
\xi(W\tau)=\xi(\tau),
\qquad
A(W\tau)=-\frac{(6\tau-3)^2}{3}A(\tau),
\end{equation}
that is,
\begin{equation}\label{eq:A-slash-W}
A|_2W=-A.
\end{equation}

Define
\begin{equation}\label{eq:E-def}
E(\tau):=\frac{\mathcal B(\xi(\tau))}{A(\tau)}.
\end{equation}
The next proposition identifies $E$ with a weight $-2$ Eichler integral.

\begin{proposition}\label{prop:Eichler}
Let
\begin{equation}\label{eq:g-def}
g(\tau):=\frac{\Efour(\tau)-\Efour(2\tau)-9\Efour(3\tau)+9\Efour(6\tau)}{240}
=\sum_{n\ge 1}a_nq^n,
\qquad q=\e^{2\pi\ii\tau},
\end{equation}
where
\begin{equation}\label{eq:a-n-def}
a_n=\sigthree(n)-\sigthree(n/2)-9\sigthree(n/3)+9\sigthree(n/6).
\end{equation}
Then
\begin{equation}\label{eq:E-qseries}
E(\tau)=-\sum_{n\ge 1}\frac{a_n}{n^3}q^n,
\qquad\text{equivalently}\qquad
\Dop^3E=-g,
\end{equation}
where $\Dop=q\,\dd/\dd q=(2\pi\ii)^{-1}\dd/\dd\tau$.
\end{proposition}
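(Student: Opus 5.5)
The plan is to derive the third-order equation $\Dop^3E=-g$ directly from the inhomogeneous equation \eqref{eq:B-theta-inhom}, using the modular parametrization \eqref{eq:domb-param}. Afterwards I would integrate three times, fixing the constants of integration by the $q$-expansion at $\ii\infty$.

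\textbf{Step 1: conjugate the operator by $A$.} Write $L$ for the operator in \eqref{eq:domb-theta-ode}. Collecting the $\thet^3$ terms shows that its leading coefficient is
\[
1-20z+64z^2=(1-4z)(1-16z).
\]
Pulling back along $z=\xi(\tau)$, we have $\thet=(\xi/\Dop\xi)\,\Dop$. By the standard theory of symmetric-square operators attached to a modular parametrization, the solution space of $L$ is $A\cdot\{1,\tau,\tau^2\}$. Hence for any holomorphic $h$ on $\Hh$ one should have
\[
L\bigl[A\,h\bigr]=(1-4\xi)(1-16\xi)\,A\Bigl(\frac{\xi}{\Dop\xi}\Bigr)^{3}\Dop^3h .
\]
The reason is that the right-hand side is the unique third-order expression in $h$ that kills $1,\tau,\tau^2$ and has the correct leading term. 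I would check this identity rigorously by comparing the principal symbols: two monic third-order operators with the same three-dimensional kernel coincide.

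\textbf{Step 2: apply it to $\mathcal B$.} Take $h=E$, so that $A\,h=\mathcal B(\xi)$. Then \eqref{eq:B-theta-inhom} gives
\[
\Dop^3E=\frac{(\Dop\xi)^3}{\xi^2\,A\,(1-4\xi)(1-16\xi)}.
\]
It remains to identify the right-hand side with $-g$. This eta-quotient identity is the main obstacle, and I would handle it as follows.

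1. Recall, or prove via logarithmic derivatives of eta products, that
\[
\Dop\xi/\xi=A\cdot\Phi(\tau),
\]
where $\Phi$ is an explicit eta quotient of weight $0$ with $\Phi^2=(1-4\xi)(1-16\xi)$. Here $1-4\xi$ and $1-16\xi$ are themselves ratios of eta products, as in Zhou's list.
2. Substitute this into the formula above. The right-hand side then becomes an explicit eta quotient of weight $4$, roughly $\xi A^2/\Phi$.
3. Show that this eta quotient is a holomorphic modular form on $\Gamma_0(6)$ using Ligozat's order criterion at the four cusps.
4. Check that it agrees with $-g$ through the Sturm bound. For weight $4$ on $\Gamma_0(6)$ the bound is $4\cdot 12/12=4$, so comparing a handful of coefficients suffices.

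If the square-root bookkeeping with $\Phi$ is awkward, I would instead verify directly that
\[
(\Dop\xi)^3+g\,\xi^2A(1-4\xi)(1-16\xi)
\]
vanishes. This expression is a modular form of weight $8$ on $\Gamma_0(6)$, possibly meromorphic with poles controlled at the cusps, and after multiplying by a suitable eta power its vanishing reduces to a finite coefficient comparison.

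\textbf{Step 3: integrate.} Since $\xi=-q+O(q^2)$, $A=1+O(q)$ and $B_0=0$, the function $E$ is a power series in $q$ with zero constant term. A holomorphic $q$-series $F$ with $\Dop^3F=\Dop^3E$ differs from $E$ by a constant, because $\Dop^3$ annihilates only polynomials in $\tau$ and only the constants among them are $q$-series. Matching the constant terms therefore gives
\[
E=-\sum_{n\ge1}a_nn^{-3}q^n .
\]
The formula \eqref{eq:a-n-def} for $a_n$ follows immediately from $E_4=1+240\sum\sigthree(n)q^n$. As a sanity check, the coefficient of $q$ gives $E=-q+\dots$. This matches $\mathcal B(\xi)/A=\xi+O(q^2)=-q+O(q^2)$, since $a_1=1$.
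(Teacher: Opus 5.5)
Your skeleton matches the paper's. Step 1 is a self-contained replacement for the Yang variation-of-constants lemmas the paper cites: conjugating by $A$ and using that the pulled-back kernel is $A\cdot\{1,\tau,\tau^2\}$ yields exactly the paper's $\Dop^3E=(\Dop\xi)^3/(\xi^2A(1-4\xi)(1-16\xi))$. Step 3 makes explicit the integration-constant argument the paper leaves tacit. Steps 1 and 3 are fine.

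The primary plan in Step 2 cannot work, because $1-4\xi$, $1-16\xi$ and $\Phi=\sqrt{(1-4\xi)(1-16\xi)}$ are not eta quotients.
\begin{itemize}
\item An eta quotient has no zeros in $\Hh$. But $1-16\xi$ vanishes at $\tau_*\in\Hh$; this is the paper's $\xi(\tau_*)=1/16$.
\item Likewise $1-4\xi$ vanishes at the other order-$2$ elliptic point of $\langle\Gamma_0(6),W\rangle$; the singularity $z=1/4$ also has exponents $0,\tfrac12,1$.
\item The relation $\Dop\xi/\xi=A\Phi$ is true, but $\Dop\xi/\xi$ is a combination of the $E_2(\delta\tau)$ that vanishes at these ramification points.
\item The target is $-g=\xi A^2\Phi$, not $\xi A^2/\Phi$; compare $(-q+2q^2+\cdots)(1+10q+\cdots)=-q-8q^2+\cdots$. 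It is not an eta quotient either, since $g|_4W=-g$ forces $g(\tau_*)=0$.
\end{itemize}
So Ligozat's criterion has nothing to act on, and items 1--3 of your list cannot be carried out as stated.

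Your fallback is the argument that actually works, and it is essentially the paper's Sturm-bound comparison.
\begin{itemize}
\item The expression $(\Dop\xi)^3+g\,\xi^2A(1-4\xi)(1-16\xi)$ has weight $6$, not $8$.
\item It is holomorphic on $\Hh$, because $\xi$, $A$ and $g$ are.
\item Its only possible poles are at the cusps $0$ and $1/3$, where $\xi$ has a simple pole and $A$ a simple zero.
\item Hence multiplying by $A^3$ gives a holomorphic weight-$12$ form on $\Gamma_0(6)$, and its vanishing is a finite Sturm check.
\end{itemize}
Not dividing by $(1-4\xi)(1-16\xi)$ is slightly cleaner than the paper. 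The paper asserts that its $\Phi$ lies in $M_4(\Gamma_0(6))$ without checking holomorphy at the two elliptic points, where it holds because $\Dop\xi$ vanishes simply there, or at the cusps.
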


\begin{proof}
Apply the third-order variation-of-constants construction of Yang \cite[Lemmas~1--2]{Yang2008} to the ordinary inhomogeneous equation \eqref{eq:A-ordinary} for $\mathcal B$. Since the leading coefficient in \eqref{eq:A-ordinary} is $z^2(4z-1)(16z-1)$, this gives
\begin{equation}\label{eq:phi-def}
\Dop^3E(\tau)=\Phi(\tau):=\left(\frac{\Dop\xi(\tau)}{\xi(\tau)}\right)^3\frac{\xi(\tau)}{A(\tau)(1-4\xi(\tau))(1-16\xi(\tau))}.
\end{equation}
Because $(\Dop\xi)/\xi$ has weight $2$, $A$ has weight $2$, and $\xi$ is modular of weight $0$, the function $\Phi$ is a modular form of weight $4$ on $\Gamma_0(6)$.

A direct $q$-expansion from \eqref{eq:xi-def} gives
\[
\Phi(\tau)=-q-8q^2-19q^3-64q^4+O(q^5),
\]
while from \eqref{eq:g-def} one has
\[
-g(\tau)=-q-8q^2-19q^3-64q^4+O(q^5).
\]
Thus $\Phi+g\in M_4(\Gamma_0(6))$ has vanishing Fourier coefficients through $q^4$. Since $[\mathrm{SL}_2(\Z):\Gamma_0(6)]=12$, the Sturm bound for weight $4$ and level $6$ is $\lfloor 4\cdot 12/12\rfloor=4$; hence, by Sturm's theorem \cite[Theorem~3.13]{DiamondShurman}, $\Phi=-g$. This proves \eqref{eq:E-qseries}.
\end{proof}

We record the $L$-function of $g$ and its alternating twist.

\begin{proposition}\label{prop:Lfunctions}
The Dirichlet series of $g$ is
\begin{equation}\label{eq:Lg-factor}
L(g,s):=\sum_{n\ge 1}\frac{a_n}{n^s}=\zeta(s)\zeta(s-3)(1-2^{-s})(1-3^{2-s}).
\end{equation}
Hence
\begin{equation}\label{eq:Lg3}
L(g,3)=-\frac{7}{24}\zeta(3).
\end{equation}
Define the alternating twist
\begin{equation}\label{eq:Lstar-def}
L^*(s):=\sum_{n\ge 1}\frac{(-1)^n a_n}{n^s}.
\end{equation}
Then
\begin{equation}\label{eq:Lstar-factor}
L^*(s)=-\zeta(s)\zeta(s-3)(1-2^{-s})(1-2^{4-s})(1-3^{2-s}).
\end{equation}
In particular,
\begin{equation}\label{eq:Lstar-specials}
L^*(3)=-\frac{7}{24}\zeta(3),
\qquad
L^*(2)=0,
\qquad
L^*(1)=\frac{7}{4\pi^2}\zeta(3).
\end{equation}
Moreover, for every $s\in\C$,
\begin{equation}\label{eq:Lstar-fe}
\Gamma(s)\left(\frac{\pi}{\sqrt 3}\right)^{-s}L^*(s+3)
=
\Gamma(-s-2)\left(\frac{\pi}{\sqrt 3}\right)^{s+2}L^*(1-s).
\end{equation}
\end{proposition}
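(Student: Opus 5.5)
The plan is to work entirely from the divisor-sum description \eqref{eq:a-n-def} of $a_n$. All four claims reduce to manipulating $\zeta(s)\zeta(s-3)$, which also settles convergence and meromorphic continuation. The functional equation \eqref{eq:Lstar-fe} will be derived from the functional equation of $\zeta$ rather than from modularity of $g$.

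\textbf{Step 1: the factorization \eqref{eq:Lg-factor}.} Since $\sum_n\sigthree(n)n^{-s}=\zeta(s)\zeta(s-3)$ for $\operatorname{Re}s>4$, and $\sum_n\sigthree(n/m)n^{-s}=m^{-s}\zeta(s)\zeta(s-3)$, we get
\[
L(g,s)=\zeta(s)\zeta(s-3)\bigl(1-2^{-s}-9\cdot3^{-s}+9\cdot6^{-s}\bigr).
\]
The bracket factors as $(1-2^{-s})(1-3^{2-s})$. This identity extends meromorphically to all $s$. At $s=3$, using $\zeta(0)=-\tfrac12$, we obtain $L(g,3)=\zeta(3)\cdot(-\tfrac12)\cdot\tfrac78\cdot\tfrac23=-\tfrac{7}{24}\zeta(3)$, which is \eqref{eq:Lg3}.

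\textbf{Step 2: the twist \eqref{eq:Lstar-factor}.} The coefficients $a_n$ are multiplicative, since $L(g,s)$ is an Euler product. Put $x=2^{-p}$ with $p=s$. The local factor at $2$ is
\[
\frac{1-x}{(1-x)(1-8x)}=\frac{1}{1-8x},
\]
so $a_{2^k}=8^k$. Writing $(-1)^n=-1+2\cdot\mathbf 1_{2\mid n}$, the twisted series is $-L(g,s)$ with its $2$-factor replaced by
\[
1-2\Bigl(\frac{1}{1-8x}-1\Bigr)=\frac{1-16x}{1-8x}.
\]
This is exactly the original factor times $1-2^{4-s}$. Hence $L^*(s)=-(1-2^{4-s})L(g,s)$, which gives \eqref{eq:Lstar-factor}.

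\textbf{Step 3: the special values \eqref{eq:Lstar-specials}.}
\begin{itemize}
\item At $s=3$ the extra factor $-(1-2)=1$, so $L^*(3)=L(g,3)$.
\item At $s=2$ the factor $1-3^{0}$ vanishes, so $L^*(2)=0$.
\item At $s=1$ the pole of $\zeta(s)$ cancels against the trivial zero $\zeta(-2)=0$. Thus $\zeta(s)\zeta(s-3)\to\zeta'(-2)=-\zeta(3)/(4\pi^2)$, and multiplying by $-(\tfrac12)(-7)(-2)$ gives $\tfrac{7}{4\pi^2}\zeta(3)$.
\end{itemize}

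\textbf{Step 4: the functional equation \eqref{eq:Lstar-fe}.} This is the step I expect to be the main obstacle, though it is pure bookkeeping. Substituting the factorization, the left side involves $\zeta(s+3)\zeta(s)$ and the right side involves $\zeta(1-s)\zeta(-2-s)$. I will use the asymmetric form $\zeta(1-u)=2(2\pi)^{-u}\cos(\pi u/2)\Gamma(u)\zeta(u)$ with $u=s$ and $u=s+3$. This gives
\[
\zeta(1-s)\zeta(-2-s)=4(2\pi)^{-2s-3}\cos\tfrac{\pi s}{2}\cos\tfrac{\pi(s+3)}{2}\,\Gamma(s)\Gamma(s+3)\,\zeta(s)\zeta(s+3).
\]
Two simplifications follow:
\begin{itemize}
\item The cosine product equals $\tfrac12\sin(\pi s)$.
\item $\Gamma(s+3)\Gamma(-s-2)=\pi/\sin(\pi(s+3))=-\pi/\sin(\pi s)$.
\end{itemize}
Together these leave only a power of $2$ and $\pi$, and the $\pi$-powers should match $(\pi/\sqrt3)^{2s+2}$. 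It remains to compare the finite Euler factors $(1-2^{-s-3})(1-2^{1-s})(1-3^{-1-s})$ with their values at $1-s$, namely $(1-2^{s-1})(1-2^{s+3})(1-3^{s+1})$. Each pair $1-p^{a}$ versus $1-p^{-a}$ differs by $-p^{a}$. The product of these ratios should supply exactly the remaining factor $3^{s+1}$ and cancel the stray powers of $2$ and sign. If a mismatch appears, that signals an arithmetic slip to be rechecked, since the level-$6$ normalization $\pi/\sqrt3$ is dictated by $g(\tau+\tfrac12)$ being a form for a Fricke-type involution of conductor $12$. Finally, both sides are meromorphic in $s$, so the identity proved in a half-plane holds for every $s\in\C$.
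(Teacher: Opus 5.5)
Your proposal is correct and follows the paper's route. Like the paper, you factor $L(g,s)$ through $\sum\sigthree(n)n^{-s}=\zeta(s)\zeta(s-3)$ and handle the twist through the $2$-part; your local factor $1/(1-8x)$ is the paper's $a_{2^rm}=8^ra_m$. You then read off the special values and obtain \eqref{eq:Lstar-fe} from the functional equation of $\zeta$.

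There is one false displayed equality in Step~2. As written, $1-2\bigl(\frac{1}{1-8x}-1\bigr)=\frac{1-24x}{1-8x}$, not $\frac{1-16x}{1-8x}$. From $(-1)^n=-1+2\cdot\mathbf 1_{2\mid n}$ you have $L^*=-L+2L_{\mathrm{even}}$ with $L_{\mathrm{even}}=\bigl(\frac{1}{1-8x}-1\bigr)L_{\mathrm{odd}}$. So the replaced factor is $\frac{1}{1-8x}-2\bigl(\frac{1}{1-8x}-1\bigr)=2-\frac{1}{1-8x}=\frac{1-16x}{1-8x}$: you wrote $1$ where the original factor $\frac{1}{1-8x}$ belongs. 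Your conclusion $L^*(s)=-(1-2^{4-s})L(g,s)$ is unaffected.

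In Step~4 the bookkeeping you left hedged does close, so you can state it as proved:
\begin{itemize}
\item The Euler-factor ratio is $-2^{2s+2}3^{s+1}$.
\item Its power of $2$ exactly cancels $4\cdot\tfrac12\cdot2^{-2s-3}$ from the two zeta functional equations and the cosine product.
\item Its sign cancels the one from $\Gamma(s+3)\Gamma(-s-2)=-\pi/\sin\pi s$.
\item What remains is $\pi^{-2s-3}\cdot\pi\cdot3^{s+1}=(\pi/\sqrt3)^{-(2s+2)}$, which is exactly the required factor.
\end{itemize}
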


\begin{proof}
Equation \eqref{eq:Lg-factor} follows immediately from \eqref{eq:a-n-def} and the classical identity
\[
\sum_{n\ge 1}\frac{\sigthree(n)}{n^s}=\zeta(s)\zeta(s-3).
\]
Substituting $s=3$ yields \eqref{eq:Lg3} because $\zeta(0)=-1/2$.

For the twist, write $n=2^rm$ with $m$ odd. Since \eqref{eq:a-n-def} implies
\[
a_{2^rm}=8^r a_m\qquad(m\text{ odd},\ r\ge 0),
\]
we get
\begin{align*}
L^*(s)
&=\sum_{m\text{ odd}}\frac{a_m}{m^s}\left(-1+\sum_{r\ge 1}\frac{8^r}{2^{rs}}\right)\\
&=\sum_{m\text{ odd}}\frac{a_m}{m^s}\left(-1+\frac{2^{3-s}}{1-2^{3-s}}\right)
=-\frac{1-2^{4-s}}{1-2^{3-s}}\sum_{m\text{ odd}}\frac{a_m}{m^s}.
\end{align*}
On the other hand, removing the $2$-Euler factor from \eqref{eq:Lg-factor} gives
\[
\sum_{m\text{ odd}}\frac{a_m}{m^s}=(1-2^{3-s})L(g,s).
\]
Combining these formulas with \eqref{eq:Lg-factor} proves \eqref{eq:Lstar-factor}. The values in \eqref{eq:Lstar-specials} follow immediately, using the standard identity
\[
\zeta'(-2)=-\frac{\zeta(3)}{4\pi^2}.
\]

Finally, \eqref{eq:Lstar-fe} is a direct consequence of \eqref{eq:Lstar-factor} and the functional equation of the Riemann zeta function. Indeed,
\begin{align*}
&\Gamma(s)\left(\frac{\pi}{\sqrt 3}\right)^{-s}L^*(s+3)\\
&\qquad =-\Gamma(s)\left(\frac{\pi}{\sqrt 3}\right)^{-s}\zeta(s+3)\zeta(s)(1-2^{-s-3})(1-2^{1-s})(1-3^{-s-1})
\end{align*}
and the two zeta functional equations transform the right-hand side into
\[
\Gamma(-s-2)\left(\frac{\pi}{\sqrt 3}\right)^{s+2}L^*(1-s).
\qedhere
\]
\end{proof}

\section{Local analysis at the dominant singularity}\label{sec:local}

Let
\begin{equation}\label{eq:tau-star}
\tau_*:=\frac12+\frac{\ii}{2\sqrt 3}.
\end{equation}
Then $\tau_*$ is the unique point of the upper half-plane satisfying
\begin{equation}\label{eq:tau-star-quad}
3\tau^2-3\tau+1=0,
\end{equation}
and $W(\tau_*)=\tau_*$. By \cite[(3.11)]{Zhou1911},
\begin{equation}\label{eq:xi-at-taustar}
\xi(\tau_*)=\frac{1}{16}.
\end{equation}
Thus the dominant singularity of $\mathcal A(z)$ corresponds to the order-$2$ elliptic fixed point $\tau_*$. Write
\[
z_0:=\frac{1}{16},
\qquad
\varepsilon:=1-16z.
\]

\begin{proposition}\label{prop:local-exponents}
The point $z_0=1/16$ is a regular singular point of \eqref{eq:A-ordinary}, and its local exponents are
\[
0,\qquad \frac12,\qquad 1.
\]
Consequently, near $z_0$ one has expansions
\begin{equation}\label{eq:A-B-local}
\mathcal A(z)=\alpha_0+\alpha_1\varepsilon^{1/2}+O(\varepsilon),
\qquad
\mathcal B(z)=\beta_0+\beta_1\varepsilon^{1/2}+O(\varepsilon)
\end{equation}
for suitable constants $\alpha_0,\alpha_1,\beta_0,\beta_1\in\C$.
\end{proposition}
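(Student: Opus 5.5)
The plan is to run a Frobenius computation at $z_0=1/16$ directly from the $\theta$-form \eqref{eq:domb-theta-ode}, and then pass to the inhomogeneous equation for $\mathcal B$.

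\emph{Regularity and exponents.} The leading coefficient in \eqref{eq:A-ordinary} is $z^2(4z-1)(16z-1)$, and it has a simple zero at $z_0$. Regular singularity therefore reduces to checking pole orders after dividing by this coefficient: the coefficient of $y''$ may have at most a simple pole at $z_0$, that of $y'$ at most a double pole, and that of $y$ at most a triple pole. Since the zero is simple, this is automatic. For the indicial equation, write $y=\varepsilon^{\rho}(1+O(\varepsilon))$ with $\dd/\dd z=-16\,\dd/\dd\varepsilon$. Only the $y'''$ and $y''$ terms contribute at leading order $\varepsilon^{\rho-2}$. At $z_0$ we have $z^2(4z-1)=\tfrac{1}{256}\cdot(-\tfrac34)$ and $16z-1=-\varepsilon$, while $3z(128z^2-30z+1)$ evaluated at $z_0$ equals $\tfrac{3}{16}(\tfrac12-\tfrac{15}{8}+1)=-\tfrac{9}{128}$. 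Collecting the $\varepsilon^{\rho-2}$ terms gives
\[
\rho(\rho-1)\bigl[c_3(\rho-2)+c_2\bigr]=0
\]
for explicit constants $c_3,c_2$. The root $\rho=1/2$ should appear as the third root, and the values above do seem to produce it. So the indicial roots are $0,\tfrac12,1$, and I would double-check this arithmetic against the known hypergeometric form of the Domb equation.

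\emph{Local basis.} The exponents $0$ and $1$ differ by an integer, so a logarithm could in principle enter the exponent-$0$ solution. I would argue it does not: $\mathcal A$ is bounded near $z_0$ along $[0,z_0)$, which follows from the modular parametrization near the elliptic point $\tau_*$, where $\xi-\tfrac1{16}$ vanishes to order $2$ in the local uniformizer. Alternatively, one can check that the Frobenius recursion at the resonant step has vanishing obstruction. Either way every local solution has the form $h_0(\varepsilon)+\varepsilon^{1/2}h_1(\varepsilon)$ with $h_0,h_1$ holomorphic. Then \eqref{eq:A-B-local} for $\mathcal A$ follows, since $\mathcal A$ on the disc $|z|<1/16$ continues analytically to a neighbourhood of $z_0$ minus a slit and is a local solution there.

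\emph{The inhomogeneous equation.} The forcing term $z$ in \eqref{eq:B-theta-inhom} is holomorphic at $z_0$. The indicial polynomial evaluated at the exponents $0,1,2,\dots$ of the forcing has zeros only at $0$ and $1$. A particular solution $\mathcal B_p$ is obtained by power series in $\varepsilon$, and at the resonant orders we adjust by the homogeneous solutions. The same no-log mechanism is needed here, and I would verify that resonance at order $\varepsilon^1$ produces at worst $\varepsilon\log\varepsilon$, which is already $O(\varepsilon)$ in the weak sense needed. In fact only $\beta_0+\beta_1\varepsilon^{1/2}+o(\varepsilon^{1/2})$ is used later, so $\varepsilon\log\varepsilon$ is harmless. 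Then $\mathcal B=\mathcal B_p+(\text{homogeneous})$ gives the claimed expansion.

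\emph{Main obstacle.} The hardest step is excluding logarithms at the integer-separated exponents $0,1$. If the Frobenius check becomes messy, I would fall back on the weaker statement with $O(\varepsilon\log\varepsilon)$ error, which suffices for the transfer argument.
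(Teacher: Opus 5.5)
Your regularity argument and indicial computation are fine: your values give $c_3=-12$, $c_2=-18$, hence $-6\rho(\rho-1)(2\rho-1)$, as in the paper. The gap is the step you flag yourself, namely excluding a logarithm produced by the exponents $0$ and $1$, which differ by an integer. Ruling this out is exactly what the $O(\varepsilon)$ error terms in \eqref{eq:A-B-local} assert.

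Boundedness of $\mathcal A$ along $[0,z_0)$ cannot settle this. A resonant exponent-$0$ solution would look like $1+c\,\varepsilon\log\varepsilon+\cdots$, which is bounded (indeed continuous) at $\varepsilon=0$. Also, log-freeness of $\mathcal A$ alone would not justify your claim about \emph{every} local solution, and you need that claim for the homogeneous part of $\mathcal B$.

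The fallback does not prove the proposition either: $\varepsilon\log\varepsilon$ is not $O(\varepsilon)$, only $o(\varepsilon^{1/2})$. That weaker form would still feed into \cref{lem:transfer} through the $o$-version of the transfer theorem on a $\Delta$-domain, but it is not the statement.

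Your resonance discussion for $\mathcal B$ is also misdirected. Normalize the equation to Euler form at $\varepsilon=0$ by multiplying the ordinary form by $\varepsilon^2$. A right-hand side holomorphic at $z_0$ then enters only at orders $\varepsilon^k$ with $k\ge 2$, where the indicial polynomial is nonzero, so the forcing never resonates.

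The missing step is one line: the Frobenius check you mention but do not carry out. The coefficients of \eqref{eq:A-ordinary} are holomorphic at $z_0$, so applying the operator to $\sum_k c_k\varepsilon^k$ yields no negative powers of $\varepsilon$. For $k\ge 2$, the coefficient of $\varepsilon^{k-2}$ on the left is $-6k(k-1)(2k-1)c_k$ plus a combination of $c_0,\dots,c_{k-1}$. So $c_0,c_1$ are free, and every later $c_k$ is determined by matching with the holomorphic right-hand side. The would-be obstruction at $k=1$ lives at order $\varepsilon^{-1}$, which simply does not occur.

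This gives a two-dimensional space of holomorphic local solutions and a holomorphic particular solution for $\mathcal B$. Since $\tfrac12$ does not differ from $0$ or $1$ by an integer, there is also a third solution $\varepsilon^{1/2}h(\varepsilon)$ with $h$ holomorphic. Then \eqref{eq:A-B-local} follows with genuine $O(\varepsilon)$ errors.

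Completed this way, your route is a self-contained ODE argument that differs from the paper's. The paper gets log-freeness for free from the modular side: $A(\tau)$ and $\mathcal B(\xi(\tau))=A(\tau)E(\tau)$ are holomorphic at $\tau_*$, and $\varepsilon^{1/2}$ is a local coordinate there by \eqref{eq:ramification}. Your version needs neither \cref{prop:Eichler} nor the identification of the analytic continuation near $z_0$ with these modular functions.
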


\begin{proof}
In \eqref{eq:A-ordinary} the coefficient of $y'''$ has a simple zero at $z_0$, so $z_0$ is a regular singular point. Substitute $z=(1-\varepsilon)/16$ and $y=\varepsilon^r$. Since $\dd/\dd z=-16\,\dd/\dd\varepsilon$, the coefficient of $\varepsilon^{r-2}$ in \eqref{eq:A-ordinary} is
\[
-6r(2r-1)(r-1).
\]
Hence the indicial equation is $r(2r-1)(r-1)=0$, and the exponents are exactly $0$, $1/2$, and $1$.

Now $A(\tau)=\mathcal A(\xi(\tau))$ and $\mathcal B(\xi(\tau))$ are holomorphic in $\tau$ near $\tau_*$. Since $\tau_*$ is an elliptic point of order $2$ for the genus-zero group generated by $\Gamma_0(6)$ and $W$, the Hauptmodul $\xi$ has ramification index $2$ at $\tau_*$; equivalently,
\begin{equation}\label{eq:ramification}
\xi(\tau)-\frac{1}{16}=c(\tau-\tau_*)^2+O\bigl((\tau-\tau_*)^3\bigr)
\qquad(c\ne 0).
\end{equation}
Therefore $\varepsilon^{1/2}$ is a holomorphic local parameter in $\tau$, and the expansions \eqref{eq:A-B-local} follow.
\end{proof}

\begin{lemma}[transfer to coefficients]\label{lem:transfer}
The coefficients of $\mathcal A$ and $\mathcal B$ satisfy
\[
D_n\sim -\frac{\alpha_1}{2\sqrt\pi}\,16^n n^{-3/2},
\qquad
B_n\sim -\frac{\beta_1}{2\sqrt\pi}\,16^n n^{-3/2},
\]
and hence
\begin{equation}\label{eq:limit-branching}
\lim_{n\to\infty}\frac{B_n}{D_n}=\frac{\beta_1}{\alpha_1}.
\end{equation}
\end{lemma}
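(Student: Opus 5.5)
Our plan is to apply Flajolet--Odlyzko singularity analysis to $\mathcal A$ and $\mathcal B$ separately, using the local expansions \eqref{eq:A-B-local} of \cref{prop:local-exponents} at $z_0=1/16$.

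First we locate the singularities. The singular points of \eqref{eq:A-ordinary} are $z=0$, $z=1/16$, $z=1/4$ and $\infty$. The series $\mathcal A$ and $\mathcal B$ are holomorphic at $0$: $\mathcal A$ is by definition, and $\mathcal B$ because \eqref{eq:B-theta-inhom} has the power-series solution generated by the recurrence. A solution of a linear ODE with polynomial coefficients, or of one with a polynomial inhomogeneity, continues analytically along every path avoiding the singular points. Hence both series extend analytically to $\C\setminus\bigl([1/16,\infty)\bigr)$, and in particular to a $\Delta$-domain at $z_0$ of the form $\{|z|<R,\ z\neq z_0,\ |\arg(z-z_0)|>\phi\}$ with $1/16<R<1/4$ and $0<\phi<\pi/2$.

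Next we refine the local expansions. The exponents at $z_0$ are $0,\tfrac12,1$, and they differ by half-integers. So $\varepsilon^{1/2}$ is not an integer difference from $0$ or $1$, while $0$ and $1$ differ by an integer and could produce a logarithm. By the ramification argument of \cref{prop:local-exponents}, $\mathcal A$ and $\mathcal B$ are holomorphic functions of $\varepsilon^{1/2}$, so no logarithms occur. We therefore get convergent expansions
\[
\mathcal A=\sum_{k\ge0}\alpha_k\varepsilon^{k/2},\qquad \mathcal B=\sum_{k\ge0}\beta_k\varepsilon^{k/2}.
\]
For $\mathcal B$ one should note that $z$ is a holomorphic function of $\tau$ near $\tau_*$, so $\mathcal B(\xi(\tau))$ is indeed holomorphic there. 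The integer powers $\varepsilon^0$ and $\varepsilon^1$ contribute only to finitely many Taylor coefficients. Thus $\mathcal A-\alpha_0-\alpha_2\varepsilon=\alpha_1\varepsilon^{1/2}+O(\varepsilon^{3/2})$ in the $\Delta$-domain, and similarly for $\mathcal B$.

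Now we apply the transfer theorem. Since $[z^n](1-16z)^{1/2}=16^n\,n^{-3/2}/\Gamma(-\tfrac12)\,(1+O(1/n))$ and $\Gamma(-\tfrac12)=-2\sqrt\pi$, the main term gives $-\tfrac{\alpha_1}{2\sqrt\pi}16^nn^{-3/2}$. The error term $O(\varepsilon^{3/2})$ contributes $O(16^nn^{-5/2})$.

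Finally we must check that $\alpha_1\neq0$. This is the main point to verify; without it, \eqref{eq:limit-branching} is not meaningful. Since $D_n>0$ and $D_{n+1}/D_n\to16$, the radius of convergence of $\mathcal A$ is exactly $1/16$. If $\alpha_1$ were $0$, then $\mathcal A$ would be holomorphic in $\varepsilon^{1/2}$ with only odd powers of order at least $3$. To exclude this cleanly, I would use the modular side: \eqref{eq:ramification} together with $A(\tau_*)$ and $A'(\tau_*)$ determines $\alpha_1$ through $\dd A/\dd\tau$ at $\tau_*$. Alternatively, $A|_2W=-A$ at the fixed point $\tau_*$ forces $A$ to have a specific parity in the uniformizer, and this should force the odd part to start at order one. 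The simplest route may be to note that $D_n=\Theta(16^nn^{-3/2})$ is known from the explicit binomial sum, so $\alpha_1\ne0$ follows directly. Dividing the two asymptotics then gives $B_n/D_n\to\beta_1/\alpha_1$.
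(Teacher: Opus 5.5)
Your transfer step is the paper's argument: the local $\varepsilon^{1/2}$-expansion from \cref{prop:local-exponents} followed by Flajolet--Odlyzko. Your additions are correct, namely the continuation to the slit plane $\C\setminus[1/16,\infty)$ (hence to a $\Delta$-domain) and the sharper remainder $O(\varepsilon^{3/2})$. They are more careful than the paper, which only notes that there is no other singularity on $|z|=1/16$.

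The genuine gap is the step you flag yourself, $\alpha_1\neq0$. It is needed both for the asymptotic to be meaningful and for dividing by $\alpha_1$; the paper's proof passes over it silently. None of your three routes is actually carried out.

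The modular route is missing an input. The parity forced by $A|_2W=-A$ does not by itself make the odd part start in degree one. Combining the expansion in the proof of \cref{lem:derivative-ratio} with \cref{lem:A-derivative}, what it gives is
\[
\alpha_1\lambda=A'(\tau_*)=2\ii\sqrt3\,A(\tau_*)=2\ii\sqrt3\,\alpha_0,\qquad \lambda\neq0 .
\]
So you still need $\alpha_0=A(\tau_*)\neq0$. This holds because $A$ is an eta quotient and $\eta$ has no zeros on $\Hh$. Alternatively, $\alpha_0=\lim_{z\to1/16^-}\mathcal A(z)\ge D_0=1$ by positivity of the $D_n$.

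Your first route can be finished using only the differential equation. The odd part of $\mathcal A$ in $\varepsilon^{1/2}$ equals $\tfrac12(\mathcal A-M\mathcal A)$, where $M$ is the local monodromy at $1/16$. It is therefore a solution in the $(-1)$-eigenspace. That eigenspace is one-dimensional, since only the exponent $\tfrac12$ contributes eigenvalue $-1$, and it is spanned by the Frobenius solution $\varepsilon^{1/2}(1+O(\varepsilon))$. Hence $\alpha_1=0$ would kill the entire odd part. Then $\mathcal A$ would be analytic at $1/16$, and so on a neighbourhood of $|z|\le 1/16$. That contradicts the radius of convergence being exactly $1/16$.

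Your third route remains an assertion until $D_n\gg 16^n n^{-3/2}$ is actually proved (e.g.\ by a Laplace-type estimate of the binomial sum) or cited.

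A side remark: holomorphy of $\mathcal B(\xi(\tau))$ at $\tau_*$ does not follow from holomorphy of $\xi$, since $\mathcal B$ is singular at $\xi(\tau_*)=1/16$. The correct reason is $\mathcal B\circ\xi=A\,E$, with $A$ and $E=-\sum_{n\ge1}a_nq^n/n^3$ both holomorphic on $\Hh$ (\cref{prop:Eichler}). This point lies inside \cref{prop:local-exponents}, which the lemma may take as given, so it does not affect your transfer argument.
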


\begin{proof}
By \cref{prop:local-exponents}, both functions admit local expansions of the form
\[
F(z)=c_0+c_1\bigl(1-z/z_0\bigr)^{1/2}+O(1-z/z_0)
\qquad(z_0=1/16).
\]
Since there is no other singularity on $|z|=z_0$, the Flajolet--Odlyzko transfer theorem applies; see \cite[Theorem~VI.1]{FlajoletSedgewick}. It gives
\[
[z^n]F(z)\sim -\frac{c_1}{2\sqrt\pi}\,z_0^{-n}n^{-3/2}.
\]
Applying this to $\mathcal A$ and $\mathcal B$ yields the stated asymptotics and \eqref{eq:limit-branching}.
\end{proof}

\begin{lemma}\label{lem:derivative-ratio}
One has
\begin{equation}\label{eq:branch-derivative-ratio}
\frac{\beta_1}{\alpha_1}=\frac{\dfrac{\dd}{\dd\tau}\mathcal B(\xi(\tau))\big|_{\tau=\tau_*}}{\dfrac{\dd}{\dd\tau}A(\tau)\big|_{\tau=\tau_*}}.
\end{equation}
\end{lemma}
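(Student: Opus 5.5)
The plan is to differentiate the local expansions \eqref{eq:A-B-local} along $z=\xi(\tau)$ and let $\tau\to\tau_*$. Set $\varepsilon(\tau):=1-16\xi(\tau)$. By \eqref{eq:ramification}, $\varepsilon(\tau)=-16c(\tau-\tau_*)^2+O((\tau-\tau_*)^3)$ with $c\ne0$. Hence a holomorphic branch of $\varepsilon^{1/2}$ exists near $\tau_*$:
\[
\varepsilon(\tau)^{1/2}=\lambda(\tau-\tau_*)+O\bigl((\tau-\tau_*)^2\bigr),
\qquad \lambda^2=-16c,\ \lambda\ne0.
\]
This is a local coordinate at $\tau_*$. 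The branch is chosen so that it matches the branch of $\varepsilon^{1/2}$ used in \eqref{eq:A-B-local} on the region approached from the cusp, along the path by which $\xi(\tau)$ tends to $1/16$ inside the disc of convergence.

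The key step is to justify the expansions in this coordinate more precisely. By Proposition~\ref{prop:local-exponents}, the local solution space at $z_0$ is spanned by a holomorphic series in $\varepsilon$, the function $\varepsilon^{1/2}$ times a holomorphic series in $\varepsilon$, and a third solution with exponent $1$. The exponents $0$ and $1$ differ by an integer, so the third solution could a priori carry a $\log\varepsilon$ term. However, $A(\tau)$ and $\mathcal B(\xi(\tau))$ are holomorphic in $\tau$ near $\tau_*$, and $\log\varepsilon=2\log(\tau-\tau_*)+\dots$ is not holomorphic there. So no logarithm occurs in these particular functions, and they become power series in $t:=\varepsilon^{1/2}$. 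For $\mathcal B$ I would use the same reasoning after adding a particular solution of \eqref{eq:B-theta-inhom}; this particular solution is holomorphic at $z_0$ because the forcing term $z$ is. The outcome is
\[
\mathcal A(\xi(\tau))=\alpha_0+\alpha_1 t+O(t^2),\qquad
\mathcal B(\xi(\tau))=\beta_0+\beta_1 t+O(t^2),
\]
where the $O(t^2)=O(\varepsilon)$ terms are holomorphic in $\tau$. These are exactly the constants of \eqref{eq:A-B-local}.

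Differentiating at $\tau=\tau_*$, where $t=0$ and $dt/d\tau=\lambda$, gives
\[
\frac{d}{d\tau}A(\tau)\Big|_{\tau_*}=\alpha_1\lambda,
\qquad
\frac{d}{d\tau}\mathcal B(\xi(\tau))\Big|_{\tau_*}=\beta_1\lambda.
\]
The $O(t^2)$ terms contribute nothing at first order. Dividing cancels $\lambda$ and yields \eqref{eq:branch-derivative-ratio}, provided $\alpha_1\ne0$.

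The main obstacle is showing $\alpha_1\ne0$, which is needed for the quotient to make sense and also for Lemma~\ref{lem:transfer}. I would first try using $A|_2W=-A$ from \eqref{eq:A-slash-W}. At the fixed point, $6\tau_*-3=\ii\sqrt3$, so $-(6\tau_*-3)^2/3=1$ and $A(W\tau)=A(\tau)$ imposes no constraint on $A(\tau_*)$ directly. Differentiating $A(W\tau)=-\tfrac{(6\tau-3)^2}{3}A(\tau)$ at $\tau_*$ instead, using $W'(\tau_*)=3/(6\tau_*-3)^2=-1$, gives $-A'(\tau_*)=A'(\tau_*)-4\ii\sqrt3\,A(\tau_*)$. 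Hence $A'(\tau_*)=2\ii\sqrt3\,A(\tau_*)$. Because $A$ is an eta quotient, it is nonvanishing on $\Hh$, so $A(\tau_*)\ne0$ and therefore $A'(\tau_*)\ne0$. This gives $\alpha_1\lambda\ne0$ as required.

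A secondary point to handle is branch consistency: the branch of $t=\varepsilon^{1/2}$ on the $\tau$-side must agree with the principal branch on $|z|<1/16$ used in the transfer lemma. Any sign ambiguity in $\lambda$ cancels in the ratio, since the same $t$ is used for both $\mathcal A$ and $\mathcal B$.
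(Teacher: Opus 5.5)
Your proof is correct and is essentially the paper's argument: write $\varepsilon^{1/2}=\lambda(\tau-\tau_*)+O((\tau-\tau_*)^2)$ with $\lambda\ne0$, substitute into the local expansions, differentiate at $\tau_*$, and cancel $\lambda$. Your extra points are valid refinements that the paper leaves implicit: no logarithmic term can survive because $A$ and $\mathcal B\circ\xi$ are holomorphic at $\tau_*$; the sign choice of $\lambda$ cancels in the ratio; and $\alpha_1\ne0$ follows from $A'(\tau_*)=2\ii\sqrt3\,A(\tau_*)$ (the paper's Lemma~\ref{lem:A-derivative}) together with the nonvanishing of the eta quotient $A$ on $\Hh$.
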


\begin{proof}
Let $u=\tau-\tau_*$. By \eqref{eq:ramification} we have
\[
\varepsilon^{1/2}=\lambda u+O(u^2)
\qquad(\lambda\ne 0).
\]
Hence \eqref{eq:A-B-local} becomes
\[
A(\tau)=\alpha_0+\alpha_1\lambda u+O(u^2),
\qquad
\mathcal B(\xi(\tau))=\beta_0+\beta_1\lambda u+O(u^2).
\]
Differentiating at $u=0$ gives
\[
A'(\tau_*)=\alpha_1\lambda,
\qquad
\frac{\dd}{\dd\tau}\mathcal B(\xi(\tau))\Big|_{\tau=\tau_*}=\beta_1\lambda,
\]
which proves \eqref{eq:branch-derivative-ratio}.
\end{proof}

\section{Atkin--Lehner transformation law}\label{sec:AL}

We first compute the action of $W$ on the Eisenstein combination $g$.

\begin{lemma}\label{lem:E4-transform}
With $W$ as in \eqref{eq:W-def}, one has
\[
\Efour(\tau)|_4W=9\Efour(3\tau),
\qquad
\Efour(2\tau)|_4W=9\Efour(6\tau),
\]
\[
\Efour(3\tau)|_4W=\frac{1}{9}\Efour(\tau),
\qquad
\Efour(6\tau)|_4W=\frac{1}{9}\Efour(2\tau).
\]
Consequently,
\begin{equation}\label{eq:g-anti}
g|_4W=-g.
\end{equation}
\end{lemma}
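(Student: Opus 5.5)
The plan is to factor each matrix that occurs into an element of $\mathrm{SL}_2(\Z)$ times an upper-triangular scaling matrix. Then use the invariance $\Efour|_4\gamma=\Efour$ for $\gamma\in\mathrm{SL}_2(\Z)$ and the fact that scalar matrices act trivially in weight $4$ under the normalization $(\det M)^{k/2}(c\tau+d)^{-k}$. Throughout we use $(f|_k M)|_k N=f|_k(MN)$, which holds for this normalized slash operator on matrices of positive determinant. We also use the elementary identity
\[
\Efour(m\tau)=m^{-2}\,\Efour|_4\sm{m}{0}{0}{1},
\]
which holds since $\bigl(f|_4\sm{m}{0}{0}{1}\bigr)(\tau)=m^{2}f(m\tau)$.

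First, for $\Efour$ itself I write $W=\sm{1}{-2}{2}{-3}\sm{3}{0}{0}{1}$, where the first factor has determinant $1$. Hence $\Efour|_4W=\Efour|_4\sm{3}{0}{0}{1}=9\Efour(3\tau)$.

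The other three cases go the same way.
\begin{itemize}
\item For $\Efour(2\tau)$ I compute $\sm{2}{0}{0}{1}W=\sm{6}{-4}{6}{-3}=\sm{1}{-4}{1}{-3}\sm{6}{0}{0}{1}$, with the left factor in $\mathrm{SL}_2(\Z)$. This gives $\Efour(2\tau)|_4W=2^{-2}\cdot 6^{2}\Efour(6\tau)=9\Efour(6\tau)$.
\item For $\Efour(3\tau)$, $\sm{3}{0}{0}{1}W=\sm{9}{-6}{6}{-3}=3\sm{3}{-2}{2}{-1}$ with the last matrix in $\mathrm{SL}_2(\Z)$. Since the scalar $3I$ acts trivially, $\Efour(3\tau)|_4W=3^{-2}\Efour(\tau)$.
\item For $\Efour(6\tau)$, $\sm{6}{0}{0}{1}W=\sm{18}{-12}{6}{-3}=3\sm{6}{-4}{2}{-1}$, and $\sm{6}{-4}{2}{-1}=\sm{3}{-4}{1}{-1}\sm{2}{0}{0}{1}$ with the first factor of determinant $1$. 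Hence $\Efour(6\tau)|_4W=6^{-2}\cdot 2^{2}\Efour(2\tau)=\tfrac19\Efour(2\tau)$.
\end{itemize}

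Finally, linearity of the slash operator applied to \eqref{eq:g-def} gives
\[
240\,g|_4W=9\Efour(3\tau)-9\Efour(6\tau)-\Efour(\tau)+\Efour(2\tau)=-240\,g,
\]
which is \eqref{eq:g-anti}.

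There is no real obstacle here. The only points needing care are bookkeeping: the $(\det M)^{k/2}$ normalization, so that the power-of-$m$ factors come out exactly as $9$ and $1/9$; writing the matrix products in the correct order for the right action; and checking that each auxiliary factor really has determinant $1$. I would verify the latter explicitly for each of the four factorizations.
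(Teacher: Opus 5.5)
Your proof is correct and follows the paper's approach. The first two identities use exactly the paper's factorizations, $W=\sm{1}{-2}{2}{-3}\sm{3}{0}{0}{1}$ and $\sm{2}{0}{0}{1}W=\sm{1}{-4}{1}{-3}\sm{6}{0}{0}{1}$. The only difference is in the last two identities: the paper gets them by applying $|_4W$ to the first two, using $W^2=-3I$ and the fact that $-3I$ acts trivially in even weight, whereas you check them directly through explicit factorizations (all of which are right), and either route works.
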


\begin{proof}
Factor
\[
W=\sm{3}{-2}{6}{-3}=\sm{1}{-2}{2}{-3}\sm{3}{0}{0}{1}.
\]
Since $\sm{1}{-2}{2}{-3}\in \mathrm{SL}_2(\Z)$ and $\Efour$ is modular on $\mathrm{SL}_2(\Z)$,
\[
\Efour|_4W=(\Efour|_4\sm{1}{-2}{2}{-3})\Big|_4\sm{3}{0}{0}{1}
=\Efour\Big|_4\sm{3}{0}{0}{1}=9\Efour(3\tau).
\]
Similarly,
\[
\sm{2}{0}{0}{1}W=\sm{6}{-4}{6}{-3}=\sm{1}{-4}{1}{-3}\sm{6}{0}{0}{1},
\]
so
\[
\Efour(2\tau)|_4W
=2^{-2}\Efour\Big|_4\sm{2}{0}{0}{1}W
=2^{-2}\Efour\Big|_4\sm{1}{-4}{1}{-3}\sm{6}{0}{0}{1}
=9\Efour(6\tau).
\]
Now $W^2=-3I$, and for even weights $k$ one has $f|_k(-3I)=f$, so $|_kW$ is an involution. Applying $|_4W$ to the first two formulas gives the other two. Substituting these identities into \eqref{eq:g-def} yields \eqref{eq:g-anti}.
\end{proof}

The next lemma is the special case of Bol's identity that we need.

\begin{lemma}\label{lem:bol}
For every holomorphic function $F$ on $\Hh$,
\begin{equation}\label{eq:bol-special}
\frac{\dd^3}{\dd\tau^3}\left(\frac{(6\tau-3)^2}{3}F(W\tau)\right)
=\frac{9}{(6\tau-3)^4}F^{(3)}(W\tau).
\end{equation}
Equivalently,
\[
\Dop^3(\wt{F}{-2})=\wt{\Dop^3F}{4}.
\]
\end{lemma}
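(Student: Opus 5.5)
The plan is to reduce \eqref{eq:bol-special} to the classical Bol identity for $\mathrm{SL}_2$-type Möbius maps. Bol's identity says that for any $M=\sm{a}{b}{c}{d}$ with $\det M=\Delta>0$ and any holomorphic $F$,
\[
\frac{\dd^{3}}{\dd\tau^{3}}\Bigl((c\tau+d)^{2}F(M\tau)\Bigr)=\frac{\Delta^{3}}{(c\tau+d)^{4}}F^{(3)}(M\tau),
\]
which is the case $k=3$ of $\frac{\dd^{k}}{\dd\tau^{k}}\bigl((c\tau+d)^{k-1}F(M\tau)\bigr)=\Delta^{k}(c\tau+d)^{-k-1}F^{(k)}(M\tau)$.

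\textbf{Step 1: derive the special case of Bol's identity.} I would give a short self-contained argument rather than cite it. Since $c=6\neq 0$, write
\[
M\tau=\frac{a}{c}-\frac{\Delta}{c(c\tau+d)}.
\]
Put $w=c\tau+d$, so that $\dd/\dd\tau=c\,\dd/\dd w$, and set $G(w):=F\bigl(a/c-\Delta/(cw)\bigr)$. We then need
\[
c^{3}\frac{\dd^{3}}{\dd w^{3}}\bigl(w^{2}G(w)\bigr)=\Delta^{3}w^{-4}F^{(3)}(M\tau).
\]
I would verify this as follows.
\begin{itemize}
\item Expand $F$ locally in a Taylor series around an arbitrary point. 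By linearity it then suffices to check monomials $F(x)=(x-x_0)^{m}$, or more simply $F(x)=(x-p)^{m}$ with $p=a/c$.
\item For $p=a/c$ we get $G(w)=(-\Delta/c)^{m}w^{-m}$. Hence $w^{2}G=(-\Delta/c)^{m}w^{2-m}$.
\item The third derivative is $(2-m)(1-m)(-m)\,w^{-m-1}$. This vanishes for $m=0,1,2$, matching $F^{(3)}=0$ in those cases.
\item For general $m$ it equals $-m(m-1)(m-2)\,w^{-m-1}$. On the other side, $F^{(3)}(M\tau)=m(m-1)(m-2)(-\Delta/(cw))^{m-3}$.
\item Comparing the two sides: $c^{3}(-\Delta/c)^{m}\cdot(-m(m-1)(m-2))w^{-m-1}$ should equal $\Delta^{3}w^{-4}\,m(m-1)(m-2)(-\Delta/c)^{m-3}w^{3-m}$. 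This reduces to $c^{3}(-\Delta/c)^{3}\cdot(-1)=\Delta^{3}$, which is true.
\end{itemize}
Powers of $(x-a/c)$ span locally convergent expansions only around $a/c$, which lies on the real line. So instead I would argue by analytic identity. Both sides are linear differential operators in $F$ of order at most $3$ with coefficients that are rational in $\tau$. Agreement on all polynomials in $x$ (a shift of $x$ is a polynomial change) therefore forces the operator coefficients to agree, and the identity holds for all holomorphic $F$.

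\textbf{Step 2: specialize to $W$.} Take $M=W$, with $c\tau+d=6\tau-3$ and $\Delta=3$. Bol gives
\[
\frac{\dd^{3}}{\dd\tau^{3}}\bigl((6\tau-3)^{2}F(W\tau)\bigr)=\frac{27}{(6\tau-3)^{4}}F^{(3)}(W\tau).
\]
Dividing by $3$ yields exactly \eqref{eq:bol-special}.

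\textbf{Step 3: the slash form.} By the definition of the slash operator,
\[
\wt{F}{-2}=3^{-1}(6\tau-3)^{2}F(W\tau),\qquad \wt{F^{(3)}}{4}=9(6\tau-3)^{-4}F^{(3)}(W\tau).
\]
Then \eqref{eq:bol-special} reads
\[
\frac{\dd^{3}}{\dd\tau^{3}}\bigl(\wt{F}{-2}\bigr)=\wt{F^{(3)}}{4}.
\]
Since $\Dop^{3}=(2\pi\ii)^{-3}\,\dd^{3}/\dd\tau^{3}$ and the slash operator is linear, this is equivalent to $\Dop^{3}(\wt{F}{-2})=\wt{\Dop^{3}F}{4}$.

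\textbf{Main obstacle.} The only delicate point is the monomial-to-general reduction in Step 1. I would handle it either by the polynomial-operator argument above or by simply citing Bol's lemma in its standard form.
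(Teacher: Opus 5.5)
Your proof is correct, but it takes a different route from the paper's. The paper just differentiates $\frac{(6\tau-3)^2}{3}F(W\tau)$ three times, using the explicit values of $W'$, $W''$, $W'''$, and states that the coefficients of $F$, $F'$, $F''$ cancel; the reader is left to do the bookkeeping.

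You instead prove the weight-$3$ Bol identity for an arbitrary $M$ with $\det M=\Delta>0$ and $c\neq0$, and then specialize to $W$. The key observation is that the chain rule makes the left side a universal expression $\sum_{j=0}^{3}c_j(\tau)F^{(j)}(M\tau)$, with coefficients independent of $F$. You then pin down the $c_j$ by testing on the entire functions $(x-a/c)^m$. For these, $M\tau-a/c=-\Delta/(c w)$ with $w=c\tau+d$, so each check reduces to differentiating the pure power $w^{2-m}$.

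Your approach replaces the Faà di Bruno cancellation with a one-line computation and extends verbatim to $\frac{\dd^k}{\dd\tau^k}\bigl((c\tau+d)^{k-1}F(M\tau)\bigr)$. The paper's route is more concrete and needs no reduction argument, but it hides the real work in ``all terms cancel.''

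Some small clean-ups:
\begin{itemize}
\item The opening idea of expanding $F$ in a Taylor series can be dropped. Only the operator argument carries the proof.
\item That argument only needs $m=0,1,2,3$, and you should say why these determine the $c_j$. The system is triangular: $F=(x-p)^m$ gives $\sum_{j\le m}c_j(\tau)\frac{m!}{(m-j)!}(M\tau-p)^{m-j}$. So $m=0,1,2$ successively force $c_0=c_1=c_2=0$, and then $m=3$ gives $c_3=\Delta^3 w^{-4}$. Equivalently, at fixed $\tau$, cubics realize every $3$-jet at $M\tau$.
\item In your third bullet you drop the prefactor $(-\Delta/c)^m$. You restore it correctly in the comparison, so this is only a typo.
\end{itemize}
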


\begin{proof}
A direct differentiation suffices. Since
\[
W'(\tau)=\frac{3}{(6\tau-3)^2},
\qquad
W''(\tau)=-\frac{36}{(6\tau-3)^3},
\qquad
W'''(\tau)=\frac{648}{(6\tau-3)^4},
\]
repeated application of the chain rule shows that all terms involving $F$,$F'$,$F''$ cancel in the third derivative, leaving exactly \eqref{eq:bol-special}. Dividing by $(2\pi\ii)^3$ gives the $\Dop$-version.
\end{proof}

We now determine the period polynomial of $E$.

\begin{proposition}[Atkin--Lehner law for $E$]\label{prop:E-transform}
One has
\begin{equation}\label{eq:E-transform}
(\wt{E}{-2})(\tau)+E(\tau)=\frac{7}{6}\zeta(3)\,(3\tau^2-3\tau+1)
=-4L(g,3)\,(3\tau^2-3\tau+1).
\end{equation}
\end{proposition}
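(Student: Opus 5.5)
The plan is the standard one for Eichler integrals: first show that the difference is a polynomial of degree at most $2$, then identify that polynomial with a Mellin transform. Set
\[
P(\tau):=(\wt{E}{-2})(\tau)+E(\tau).
\]
By \cref{lem:bol} and \cref{prop:Eichler},
\[
\Dop^3P=\wt{\Dop^3E}{4}+\Dop^3E=-(g|_4W)-g.
\]
By \eqref{eq:g-anti} this vanishes, so $P$ is a polynomial in $\tau$ of degree at most $2$.

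Next we pin down the shape of $P$ using the involution. Since $W^2=-3I$ and the weight $-2$ is even, $|_{-2}W$ is an involution. Hence $P|_{-2}W=E+\wt{E}{-2}=P$, i.e. $P$ is $W$-invariant in weight $-2$. Write $P(\tau)=x\tau^2+y\tau+z$. The identity
\[
\frac{(6\tau-3)^2}{3}P(W\tau)=P(\tau)
\]
is a linear condition on $(x,y,z)$. A short computation shows that its solution space is spanned by $3\tau^2-3\tau+1$ together with possibly one more invariant quadratic, and I would determine this explicitly. An extra normalization will remove any leftover freedom. The cleanest such normalization is evaluation at a convenient point, for instance $\tau\to\ii\infty$ or $\tau=\tau_*$; at $\tau_*$ we have $P(\tau_*)=2E(\tau_*)$, which is not known a priori, so it does not help directly.

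The real content, and the main obstacle, is fixing the constant. For this I would restrict to the line $\tau=\tfrac12+\ii t$, $t>0$. Here $q=-\e^{-2\pi t}$, so $E(\tfrac12+\ii t)=-\sum a_n(-1)^n n^{-3}\e^{-2\pi nt}$; this is where the twist $L^*$ enters. On this line $W$ acts as $t\mapsto 1/(12t)$, since $W(\tfrac12+\ii t)=\tfrac12+\ii/(12t)$. The automorphy factor is $-\tfrac{(6\ii t)^2}{3}=12t^2$, up to the factor $(\det W)^{-1}$ from the slash normalization, which must be checked. Thus the transformation law becomes
\[
12t^2\,\widetilde E\!\left(\tfrac1{12t}\right)+\widetilde E(t)=P\!\left(\tfrac12+\ii t\right),
\]
where $\widetilde E(t)=-\sum_{n\ge1}(-1)^na_nn^{-3}\e^{-2\pi nt}$, and $P$ restricted to the line is a polynomial $c_0+c_2t^2$ (plus a possible $t$-term). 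The Mellin transform of $\widetilde E$ is $-(2\pi)^{-s}\Gamma(s)L^*(s+3)$. Applying Hecke's converse argument, the functional equation \eqref{eq:Lstar-fe} (note $\pi/\sqrt3$ matches the scaling $\sqrt{12}\cdot\pi/ (2\pi)\cdot$ appropriate factors) reproduces the modular relation, with the polynomial coming from the residues of $\Gamma(s)(\pi/\sqrt3)^{-s}L^*(s+3)$. The poles are at $s=0$, $s=-2$ (from $\Gamma$ and $\Gamma(-s-2)$) and $s=-1$. The residue at $s=0$ gives the constant term $-L^*(3)=\tfrac{7}{24}\zeta(3)$, scaled appropriately. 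The residue at $s=-1$ involves $L^*(2)=0$, which kills the linear term. The residue at $s=-2$ involves $L^*(1)=\tfrac{7}{4\pi^2}\zeta(3)$ and produces the $t^2$ coefficient. One also has to check that $\zeta(s)$ introduces no further poles, since its pole at $s=1$ is cancelled by $(1-2^{1-s})$.

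Matching $c_0$ and $c_2$ with $3\tau^2-3\tau+1$ evaluated on the line, namely $\tfrac14-3t^2$, fixes the polynomial. On this line $3\tau^2-3\tau+1=3(\ii t)^2+\tfrac14$. Then $2\cdot\tfrac{7}{24}\zeta(3)\cdot$(normalization) should give $\tfrac76\zeta(3)\cdot\tfrac14$, and $L^*(1)$ should give the coefficient of $t^2$ consistently. Since both $P$ and the right-hand side of \eqref{eq:E-transform} are polynomials, agreement on a line implies the identity everywhere. Finally, $-4L(g,3)=\tfrac76\zeta(3)$ follows from \eqref{eq:Lg3}.

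The bookkeeping of the constants is the delicate point: the $\det W$ factor in the slash, the $2\pi$ versus $\pi/\sqrt3$ scaling, and the signs from $(-1)^n$. The consistency between the two residues (at $s=0$ and $s=-2$) and the invariant form $3\tau^2-3\tau+1$ serves as a built-in check.
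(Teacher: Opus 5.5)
Your plan is the paper's proof. Bol's identity together with $g|_4W=-g$ makes $P$ a quadratic. On the $W$-invariant line $\tau=\tfrac12+\ii t$ (the paper's geodesic with $y=2\sqrt3\,t$, and $2\pi t=\pi y/\sqrt3$ explains the scaling in \eqref{eq:Lstar-fe}), three ingredients then determine $P$ on the line and hence everywhere: the Mellin integral, the symmetry $s\mapsto -2-s$, and the residues at $s=0,-2$, with none at $s=-1$ because $L^*(2)=0$. The $W$-invariance detour is unnecessary, since values on the line already determine a quadratic. For the record, the weight $-2$ action of $W$ on quadratics is an involution of trace $-1$, so its invariants are exactly the multiples of $3\tau^2-3\tau+1$.

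There is, however, a sign error that would make your final matching fail as written. On the line, $(\det W)^{-1}(6\tau-3)^2=\tfrac{(6\ii t)^2}{3}=-12t^2$. The factor $(\det W)^{-1}$ is exactly the division by $3$, and there is no extra minus sign. So the correct relation is
\[
\widetilde E(t)-12t^2\,\widetilde E\!\left(\tfrac{1}{12t}\right)=P\!\left(\tfrac12+\ii t\right).
\]
This is precisely what the contour shift yields: after $s\mapsto -2-s$ the shifted integral equals $+12t^2\widetilde E(1/(12t))$. Hence the left-hand side equals minus the sum of residues, namely $\tfrac{7}{24}\zeta(3)(1-12t^2)=\tfrac76\zeta(3)\bigl(\tfrac14-3t^2\bigr)$. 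With your plus sign, the relation you are trying to match is not the one the Mellin computation produces.

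The same slip causes two further errors. First, your claim $P(\tau_*)=2E(\tau_*)$ is wrong: the weight $-2$ factor at $\tau_*$ is $(6\tau_*-3)^2/3=-1$, so $P(\tau_*)=0$ automatically. Second, the factor $2$ in ``$2\cdot\tfrac{7}{24}\zeta(3)$'' is spurious: the constant term is just $-\Res_{s=0}=-L^*(3)=\tfrac{7}{24}\zeta(3)=\tfrac76\zeta(3)\cdot\tfrac14$. Once the sign is corrected, your argument coincides with the paper's.
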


\begin{proof}
Set
\[
H(\tau):=(\wt{E}{-2})(\tau)+E(\tau).
\]
By \cref{lem:bol,lem:E4-transform,prop:Eichler},
\[
\Dop^3H=(\wt{\Dop^3E}{4})+\Dop^3E=-(g|_4W)-g=0.
\]
Therefore $H$ is a polynomial of degree at most $2$.

To determine it, restrict to the $W$-invariant geodesic
\begin{equation}\label{eq:tau-y}
\tau(y):=\frac12+\frac{\ii y}{2\sqrt 3}
\qquad(y>0).
\end{equation}
Then $W\tau(y)=\tau(1/y)$ and, by \eqref{eq:E-qseries},
\begin{equation}\label{eq:F-y-def}
F(y):=E(\tau(y))=-\sum_{n\ge 1}\frac{(-1)^na_n}{n^3}\,\exp\!\left(-\frac{\pi n y}{\sqrt 3}\right).
\end{equation}
Mellin inversion gives, for $c>1$,
\begin{equation}\label{eq:Mellin-E}
F(y)=-\frac{1}{2\pi\ii}\int_{(c)}\Gamma(s)\left(\frac{\pi}{\sqrt 3}\right)^{-s}L^*(s+3)y^{-s}\,\dd s.
\end{equation}
By \eqref{eq:Lstar-fe}, the integrand is invariant under $s\mapsto -s-2$. Using this symmetry, shifting the contour, and collecting residues at $s=0,-2$ (there is no residue at $s=-1$ because $L^*(2)=0$) yields
\begin{equation}\label{eq:line-functional}
F(y)-y^2F(1/y)=\frac{7}{24}\zeta(3)(1-y^2).
\end{equation}
Indeed,
\[
\Res_{s=0}=L^*(3)=-\frac{7}{24}\zeta(3),
\]
while
\[
\Res_{s=-2}=\frac{1}{2}\left(\frac{\pi}{\sqrt 3}\right)^2L^*(1)y^2
=\frac{7}{24}\zeta(3)y^2.
\]
Now, on the geodesic \eqref{eq:tau-y},
\[
(\wt{E}{-2})(\tau(y))
=\frac{(6\tau(y)-3)^2}{3}E(W\tau(y))
=-y^2F(1/y),
\]
so \eqref{eq:line-functional} is exactly
\[
H(\tau(y))=\frac{7}{24}\zeta(3)(1-y^2).
\]
Finally,
\[
3\tau(y)^2-3\tau(y)+1=\frac{1-y^2}{4},
\]
whence
\[
H(\tau(y))=\frac{7}{6}\zeta(3)\,(3\tau(y)^2-3\tau(y)+1).
\]
Since both sides are polynomials of degree at most $2$ in $\tau$ and they agree on the infinite set $\{\tau(y):y>0\}$, identity \eqref{eq:E-transform} follows.
\end{proof}

\section{Proof of the Domb Apéry-limit}\label{sec:proof-A}

We first differentiate the transformation law at the fixed point $\tau_*$. Since
\[
3\tau_*^2-3\tau_*+1=0,
\qquad 6\tau_*-3=\ii\sqrt 3,
\qquad W'(\tau_*)=-1,
\]
differentiating \eqref{eq:E-transform} at $\tau=\tau_*$ gives
\begin{equation}\label{eq:E-derivative-relation}
E(\tau_*)+\frac{E'(\tau_*)}{2\ii\sqrt 3}=\frac{7}{24}\zeta(3).
\end{equation}
Indeed, differentiating the left-hand side of \eqref{eq:E-transform} yields
\[
\frac{\dd}{\dd\tau}(\wt{E}{-2})(\tau_*)+E'(\tau_*)=4\ii\sqrt 3\,E(\tau_*)+E'(\tau_*)+E'(\tau_*),
\]
which equals $4\ii\sqrt3\,E(\tau_*)+2E'(\tau_*)$; differentiating the right-hand side gives
\[
\frac{7}{6}\zeta(3)(6\tau_*-3)=\frac{7}{6}\zeta(3)\,\ii\sqrt 3.
\]
After division by $4\ii\sqrt 3$ we obtain \eqref{eq:E-derivative-relation}.

The same differentiation applied to \eqref{eq:A-slash-W} yields the derivative of $A$.

\begin{lemma}\label{lem:A-derivative}
At the fixed point $\tau_*$ one has
\begin{equation}\label{eq:A-derivative}
A'(\tau_*)=2\ii\sqrt 3\,A(\tau_*).
\end{equation}
\end{lemma}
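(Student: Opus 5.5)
The plan is to differentiate the transformation law \eqref{eq:xi-A-W} at $\tau=\tau_*$, exactly as was done for $E$ just above. Written out, the law reads
\[
A(W\tau)=-\frac{(6\tau-3)^2}{3}A(\tau).
\]
Differentiating in $\tau$, the left side gives $A'(W\tau)W'(\tau)$. The right side gives
\[
-4(6\tau-3)A(\tau)-\frac{(6\tau-3)^2}{3}A'(\tau).
\]

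At $\tau_*$ we use the facts recorded at the start of this section: $W\tau_*=\tau_*$, $W'(\tau_*)=-1$ and $6\tau_*-3=\ii\sqrt3$, so $(6\tau_*-3)^2/3=-1$. The left side becomes $-A'(\tau_*)$, and the right side becomes $-4\ii\sqrt3\,A(\tau_*)+A'(\tau_*)$. Equating them gives
\[
-2A'(\tau_*)=-4\ii\sqrt3\,A(\tau_*),
\]
which is \eqref{eq:A-derivative}.

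As a consistency check, evaluating \eqref{eq:xi-A-W} itself at $\tau_*$ gives $A(\tau_*)=A(\tau_*)$, so there is no constraint forcing $A(\tau_*)=0$. This matches the fact that $\alpha_0=A(\tau_*)$ is the finite value of $\mathcal A$ at $z_0$.

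There is no real obstacle here. The only care needed is the sign and the factor $W'(\tau_*)=-1$, which I would confirm from $W'(\tau)=3/(6\tau-3)^2=3/(\ii\sqrt3)^2=-1$.
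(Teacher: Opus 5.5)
Your proposal is correct and follows the paper's proof exactly. You differentiate $A(W\tau)=-\frac{(6\tau-3)^2}{3}A(\tau)$ at $\tau_*$, use $W\tau_*=\tau_*$, $W'(\tau_*)=-1$ and $(6\tau_*-3)^2=-3$, and solve $-2A'(\tau_*)=-4\ii\sqrt3\,A(\tau_*)$.
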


\begin{proof}
Equation \eqref{eq:A-slash-W} is equivalent to
\[
A(W\tau)=-\frac{(6\tau-3)^2}{3}A(\tau).
\]
Differentiate and evaluate at $\tau_*$. Since $W(\tau_*)=\tau_*$ and $W'(\tau_*)=-1$, we obtain
\[
-A'(\tau_*)=-4(6\tau_*-3)A(\tau_*)-\frac{(6\tau_*-3)^2}{3}A'(\tau_*).
\]
Because $(6\tau_*-3)^2=-3$, the last term equals $+A'(\tau_*)$, and the identity simplifies to
\[
2A'(\tau_*)=4(6\tau_*-3)A(\tau_*)=4\ii\sqrt3\,A(\tau_*),
\]
which is \eqref{eq:A-derivative}.
\end{proof}

We now complete the proof.

\begin{proof}[Proof of \cref{thm:A}]
From \eqref{eq:E-def} we have
\[
\mathcal B(\xi(\tau))=A(\tau)E(\tau).
\]
Differentiating at $\tau_*$ and using \cref{lem:A-derivative} gives
\begin{align*}
\frac{\dfrac{\dd}{\dd\tau}\mathcal B(\xi(\tau))\big|_{\tau=\tau_*}}{A'(\tau_*)}
&=\frac{A'(\tau_*)E(\tau_*)+A(\tau_*)E'(\tau_*)}{A'(\tau_*)}\\
&=E(\tau_*)+\frac{E'(\tau_*)}{2\ii\sqrt 3}.
\end{align*}
By \eqref{eq:E-derivative-relation} the right-hand side equals $\frac{7}{24}\zeta(3)$. Therefore, by \cref{lem:derivative-ratio},
\[
\frac{\beta_1}{\alpha_1}=\frac{7}{24}\zeta(3).
\]
Finally, \cref{lem:transfer} gives
\[
\lim_{n\to\infty}\frac{B_n}{D_n}=\frac{\beta_1}{\alpha_1}=\frac{7}{24}\zeta(3),
\]
as claimed.
\end{proof}

\begin{remark}
The proof shows that the relevant constant is \emph{not} the naive CM-value $E(\tau_*)$. Instead, it is the linear coefficient of the Eichler integral at the order-$2$ elliptic point, namely
\[
E(\tau_*)+\frac{E'(\tau_*)}{2\ii\sqrt3}=\frac{7}{24}\zeta(3).
\]
This is the correct replacement for the false identity $E(\tau_*)=-L(g,3)$.
\end{remark}

\end{document}